\newtheorem{theorem}{Theorem}[section]
\newtheorem{lemma}[theorem]{Lemma}
\newtheorem{question}[theorem]{Question}
\renewenvironment{proof}[1][Proof]{\noindent\textbf{#1.} }{\ \rule{0.5em}{0.5em}}
\begin{document}

\title[HKW theory for negatively curved homogeneous Finsler manifolds]{Heintze-Kobayashi-Wolf theory for negatively curved homogeneous Finsler manifolds}
\author{Ming Xu}
\address[Ming Xu] {School of Mathematical Sciences,
Capital Normal University,
Beijing 100048,
P.R. China}
\email{mgmgmgxu@163.com}

\begin{abstract}
In this paper, we generalize the Heintze-Kobayashi-Wolf theory to homogeneous Finsler geometry, by proving two main theorems. First, any connected negatively curved homogeneous Finsler manifold is isometric to a Lie group endowed with a left invariant metric, and that Lie group must be simply connected and solvable. Second, the requirement in Heintze's criterion is necessary and sufficient for
a real solvable Lie algebra to generate a Lie group which admits negatively curved left invariant Finsler metrics.

Mathematics Subject Classification(2010): 53B40, 53C30, 53C60
\vbox{}
\\
Keywords: homogeneous flag curvature formula, homogeneous Finsler manifold, Heintze-Kobayashi-Wolf theory, linear submersion, negative curvature, solvable Lie algebra
\end{abstract}

\maketitle

\section{Introduction}

Complete Riemannian manifold with strictly negative section curvature (we will call it {\it negatively curved} for simplicity) is a hot topic, which has been extensively studied \cite{BGS1985,Ha2017}. In homogeneous geometry, using Lie algebraic data to classify negatively curved homogeneous manifolds is a natural thought. However, unlike dealing with
positively curved ones \cite{WZ2018}, there are too many
smooth coset spaces admitting negative curvature,  so that explicitly classifying them is impossible.

Fortunately, we have the following Heintze-Kobayashi-Wolf theory
({\it HKW theory} in short) as a remedial measure:
\begin{enumerate}
\item
By a result of J.A. Wolf \cite{Wo1964} and its refinement by E. Heintze \cite{He1973}, any connected negatively curved homogeneous manifold is isometric to a Lie group, which is endowed with a left invariant metric. So we only need to discuss those negatively curved solvmanifolds.
\item
By a theorem of S. Kobayashi \cite{Ko1962}, any connected negatively curved homogeneous Riemannian manifold must be simply connected. So the classification for negatively curved solvmanifolds is a Lie algebraic problem.
\item
E. Heintze proved that a real solvable Lie algebra
$\mathfrak{g}$ generates a Lie group $G$ which admits negatively curved left invariant Riemannian metrics if and only if $\dim_{\mathbb{R}}\mathfrak{g}=\dim_\mathbb{R}[\mathfrak{g},\mathfrak{g}]+1$ and there exists $y_0\in\mathfrak{g}$ such that $\mathrm{ad}(y_0):[\mathfrak{g},\mathfrak{g}]\rightarrow[\mathfrak{g},\mathfrak{g}]$ only has
eigenvalues with positive real parts (see \cite{He1974} or Theorem \ref{cited-thm-1} below).
\end{enumerate}
To summarize, for each negatively curved homogeneous Riemannian manifold, the HKW theory provides a relatively simple representative for it.
Later, R. Azencott and E.N. Wilson proved similar results for homogeneous non-positive curvature
\cite{AW1976-1,AW1976-2}.

In recent years, researchers studied negative curvature in Finsler geometry, where the negatively curved property requires the flag curvature to be strictly negative everywhere. For example, Akbar-Zadeh's theorem tells us that any compact or homogeneous Finsler manifold with negative constant curvature must be Riemannian \cite{Ak1988,De2012}. Z. Shen proved that a compact Finsler manifold with negative flag curvature and constant S-curvature must be Riemannian \cite{Sh2005}.
Deng and his coworkers proved some rigidity results in the homogeneous context \cite{DH2009,XD2017}.

It is natural to ask
\begin{question}\label{question-1}
Can the HKW theory be generalized to homogeneous Finsler geometry?
\end{question}
The progresses imply a positive answer to Question \ref{question-1}. For example,
S. Deng and Z. Hou proved that
any connected negatively curved homogeneous Finsler manifold is simply connected \cite{DH2007}.
We proved that the criterion in \cite{He1974} can be generalized to some special Finsler solvmanifolds (see Theorem 1.3 in \cite{XD2017}).

In this paper, we completely answer Question \ref{question-1} by two main theorems.

\begin{theorem}\label{main-thm-1}
Any connected homogeneous Finsler manifold is isometric to a Lie group
endowed with a left invariant Finsler metric, and this Lie group must be simply connected and
solvable.
\end{theorem}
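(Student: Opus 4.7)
The plan is to extend the classical arguments of Wolf \cite{Wo1964} and Heintze \cite{He1973} from the Riemannian setting to the Finsler setting, exploiting the fact that negative flag curvature still provides enough convexity to run the fixed-point and orbit arguments. Let $M$ be a connected negatively curved homogeneous Finsler manifold. Since the isometry group of a homogeneous Finsler manifold is a Lie group, I work with the identity component $G$ of $I(M)$, which acts transitively on $M$ with compact isotropy $H$ at a chosen basepoint $o$. By the theorem of Deng--Hou \cite{DH2007} already cited, $M$ is simply connected. The goal is to produce a closed simply transitive solvable subgroup $S\subset G$; at that point $M$ is diffeomorphic to $S$ via $s\mapsto s\cdot o$, endowing $S$ with a left invariant Finsler metric, and the simple connectedness of $M$ forces $S$ to be simply connected.

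The first main step is to show that the radical $R$ of $G$ already acts transitively on $M$. Write a Levi decomposition $\g=\mathfrak{s}\ltimes\mathfrak{r}$ with $\mathfrak{r}$ the radical and $\mathfrak{s}$ a semisimple complement, and let $S_0\subset G$ denote the analytic subgroup corresponding to $\mathfrak{s}$. I would argue that $S_0$ must be compact and lie inside $H$. Non-compactness of $S_0$ is ruled out because a non-compact semisimple isometry group acting on a negatively curved space gives rise, via its Iwasawa or Cartan decomposition, to a higher-rank flat or to a totally geodesic symmetric piece of nonnegative flag curvature, contradicting strict negative flag curvature (the Finsler second variation formula under $K<0$ provides the required control on Jacobi fields). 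Once $S_0$ is compact, a Finsler Cartan-type fixed-point theorem places $S_0$ inside some isotropy subgroup, and transitivity allows one to conjugate into $H$. Consequently $R$ acts transitively on $M$.

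With $R$ transitive and $H\cap R$ compact, the next step is to extract a simply transitive closed solvable subgroup. Following Heintze, one chooses a subalgebra of $\mathfrak{r}$ complementary to $\mathfrak{h}\cap\mathfrak{r}$ inside $\mathfrak{r}$, whose analytic subgroup then acts simply transitively; this construction is purely Lie-algebraic and transfers verbatim once transitivity of $R$ is secured, because the Finsler metric plays no role in this step. The resulting $S$ inherits solvability from $\mathfrak{r}$, and simple connectedness from the diffeomorphism $S\cong M$. The main obstacle will be the Finsler Cartan fixed-point theorem for compact subgroups: the Riemannian proof relies on the smoothness and strict geodesic convexity of the squared distance function, whereas in Finsler geometry the distance is generally non-reversible and its square is only $C^1$ near the cut locus. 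I expect to handle this either by symmetrizing (averaging the forward and backward distances from a compact orbit) or by using the strict convexity of the forward distance along geodesics under $K<0$ to define a unique circumcenter of the compact orbit, which then serves as the fixed point. The same convexity technology is what underlies the exclusion of non-compact semisimple subgroups in the preceding step.
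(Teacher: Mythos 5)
Your central claim in the first step — that the semisimple factor $S_0$ in a Levi decomposition of $G=I_0(M,F)$ must be compact and hence that the radical $R$ already acts transitively — is false, and this sinks the argument. Consider real hyperbolic space $\mathbb{H}^n$ with its standard Riemannian (hence Finsler) metric: here $I_0(M)=\SO_0(n,1)$ is simple and non-compact, its radical is trivial, and yet the metric is strictly negatively curved. A non-compact semisimple group can act transitively and isometrically on a negatively curved homogeneous space; what is true (and what the paper uses) is only that a \emph{compact} subgroup must sit inside a point stabilizer. The paper therefore does not try to push the semisimple part into $H$. Instead, it takes a maximal compactly imbedded subalgebra $\mathfrak{k}\subset\mathfrak{s}$ and, via Iwasawa decompositions $\mathfrak{s}_i=\mathfrak{k}_i+\mathfrak{n}_i+\mathfrak{a}_i$ of each simple ideal, forms the solvable subalgebra $\mathfrak{g}'=\bigoplus_i(\mathfrak{n}_i+\mathfrak{a}_i)+\mathfrak{r}$; after conjugating so that $\mathfrak{k}\subset\mathfrak{h}$, one has $\mathfrak{g}'+\mathfrak{h}=\mathfrak{g}$ and hence a transitive solvable subgroup. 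This is the step your plan is missing.

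Two further points you gloss over. First, it is not automatic that $\mathfrak{k}$ exponentiates to a \emph{compact} subgroup of $G$ (as opposed to a non-compact cover of one); the paper's Lemma \ref{lemma-7} handles this by first proving that $G=I_0(M,F)$ has trivial center (Lemma \ref{lemma-6}, a consequence of Deng--Hou simple connectedness applied to the quotient by a central Clifford--Wolf translation), and then using the Hermitian-symmetric structure of the non-semisimple $\mathfrak{k}_i$ together with triviality of $C(G)$ to see the central torus directions close up. Second, your worry about a Finsler Cartan fixed-point theorem is something the paper bypasses entirely: it invokes the Cartan--Hadamard theorem to see $M$ is contractible, which together with the Lie-theoretic fact that all maximal compact subgroups of $G$ are conjugate gives Lemma \ref{lemma-1} (the isotropy $H$ is a maximal compact subgroup) with no need for a metric circumcenter argument. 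Finally, your last step (choosing a complementary subalgebra to get a simply transitive solvable group) matches the paper's Lemma \ref{lemma-10} in spirit, but the paper must first verify $\mathfrak{h}'\cap[\mathfrak{g}',\mathfrak{g}']=0$ via the Killing-orthogonal reductive decomposition (Lemma \ref{lemma-9}) to ensure the complement can be taken to be a Lie subalgebra — this is not ``verbatim'' from transitivity alone, as you suggest.
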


\begin{theorem}\label{main-thm-2}
Let $G$ be a connected simply connected solvable Lie group with $\dim_\mathbb{R}G\geq2$,
and we apply the notations
$\mathfrak{g}=\mathrm{Lie}(G)$, $\mathfrak{l}^0=[\mathfrak{g},\mathfrak{g}]$ and $\mathfrak{l}^1=[\mathfrak{l}^0,\mathfrak{l}^0]$.
Then the following claims are equivalent:
\begin{enumerate}
\item $G$ admits a negatively curved left invariant Finsler metric;
\item $\dim_\mathbb{R}\mathfrak{g}=\dim_\mathbb{R}\mathfrak{l}^0+1$
and there exists
$y_0\in\mathfrak{g}$
such that the real linear endomorphism
$\mathrm{ad}_{\mathfrak{l}^0/\mathfrak{l}^1}({u}_0)$ on $\mathfrak{l}^0/\mathfrak{l}^1$ induced by $\mathrm{ad}(y_0)=[y_0,\cdot]$ only has eigenvalues with positive real parts;
\item $\dim_\mathbb{R}\mathfrak{g}=\dim_\mathbb{R}\mathfrak{l}^0+1$ and there exists $y_0\in\mathfrak{g}$ such that the real linear endomorphism $\mathrm{ad}_{\mathfrak{l}^0}(y_0)=[y_0,\cdot]$ on $\mathfrak{l}^0$ only has eigenvalues with positive real parts.
\end{enumerate}
\end{theorem}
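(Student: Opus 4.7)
The plan is to set up a cycle of implications in which almost every arrow is handled by a citation or by formal Lie-algebra manipulations, so that only one arrow carries genuine Finsler content. For $(3)\Rightarrow(1)$, observe that the hypothesis of $(3)$ is exactly the one in Heintze's original theorem (cited as Theorem~\ref{cited-thm-1} in the introduction), so $G$ already admits a left invariant Riemannian metric of strictly negative sectional curvature; viewed as a Finsler metric, this gives negative flag curvature. For $(3)\Rightarrow(2)$, note that $\mathrm{ad}(y_0)$ descends to the quotient $\mathfrak{l}^0/\mathfrak{l}^1$ and the eigenvalues of the induced map form a subset (with multiplicity) of those on $\mathfrak{l}^0$, so positive real parts pass to the quotient. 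For the converse $(2)\Rightarrow(3)$, use that $\mathfrak{g}$ solvable forces $\mathfrak{l}^0$ nilpotent, so in its lower central series each successive quotient $\mathfrak{c}^i/\mathfrak{c}^{i+1}$ is spanned by iterated brackets of elements of $\mathfrak{l}^0/\mathfrak{l}^1$; by the derivation property of $\mathrm{ad}(y_0)$, every eigenvalue on $\mathfrak{c}^i/\mathfrak{c}^{i+1}$ is a sum of $i$ eigenvalues on $\mathfrak{l}^0/\mathfrak{l}^1$, and a sum of complex numbers with positive real part again has positive real part.

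This reduces the theorem to the Finsler-specific implication $(1)\Rightarrow(2)$. Given a negatively curved left invariant Finsler metric $F$ on $G$, the plan is as follows. Step (i): prove the codimension condition $\dim_\mathbb{R}\mathfrak{g}=\dim_\mathbb{R}\mathfrak{l}^0+1$. If $\dim_\mathbb{R}(\mathfrak{g}/\mathfrak{l}^0)\geq 2$, pick $\mathbb{R}$-linearly independent $y_1,y_2\in\mathfrak{g}$ with independent images modulo $\mathfrak{l}^0$ and apply the homogeneous flag curvature formula to the flag $(y_1,y_2)$; after modifying by an element of $\mathfrak{l}^0$ to arrange a convenient orthogonality, the flag curvature should turn out non-negative, contradicting $(1)$. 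Step (ii): fix $y\in\mathfrak{g}\setminus\mathfrak{l}^0$ and choose $y_0$ that minimizes $F$ on the affine hyperplane $y+\mathfrak{l}^0$; first variation of $F^2$ gives $g_{y_0}(y_0,\mathfrak{l}^0)=0$, which is the Finsler analogue of being orthogonal to $\mathfrak{l}^0$. Step (iii): apply the homogeneous flag curvature formula at flags $(y_0,v+\mathfrak{l}^1)$ with $v\in\mathfrak{l}^0$, push the computation down to $\mathfrak{l}^0/\mathfrak{l}^1$ (which is why it is convenient to aim at $(2)$ rather than $(3)$), and extract that negativity of $K(y_0,\cdot)$ forces the induced endomorphism $\mathrm{ad}_{\mathfrak{l}^0/\mathfrak{l}^1}(y_0)$ to have spectrum in the open right half-plane, by separating its $g_{y_0}$-symmetric and $g_{y_0}$-skew parts in the spirit of \cite{He1974}.

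The principal obstacle lies in step (iii). Unlike in the Riemannian case, the Finsler flag curvature at $(y_0,v)$ carries Cartan-tensor correction terms that involve $g_{y_0}$ non-quadratically; these do not \emph{a priori} vanish when $v$ varies in $\mathfrak{l}^0$. The critical-point choice of $y_0$ should kill the first-order cross terms through $g_{y_0}(y_0,\mathfrak{l}^0)=0$, but the surviving contributions must still be controlled by the brackets $[y_0,v]$ rather than by quantities transverse to $\mathfrak{l}^0$. The delicate part is thus to argue that, after passage to the abelian quotient $\mathfrak{l}^0/\mathfrak{l}^1$, the non-Riemannian torsion terms collapse, leaving an essentially Riemannian-looking quadratic form whose negative-definiteness on $\mathfrak{l}^0/\mathfrak{l}^1$ is equivalent to the spectral condition demanded in $(2)$. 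Once this reduction is in hand, Heintze's classical argument transfers verbatim, and the whole theorem follows from the cycle $(1)\Rightarrow(2)\Rightarrow(3)\Rightarrow(1)$.
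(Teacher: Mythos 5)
Your global strategy (cycle of implications, $(3)\Rightarrow(1)$ via Heintze's Riemannian Theorem \ref{cited-thm-1}, the algebraic arrow $(2)\Rightarrow(3)$, and all Finsler content concentrated in $(1)\Rightarrow(2)$) matches the paper, and your $(2)\Rightarrow(3)$ is actually a legitimate alternative: using that $\mathrm{ad}(y_0)$ is a derivation of the nilpotent ideal $\mathfrak{l}^0$ and that the bracket induces equivariant surjections from tensor products of the graded pieces of the lower central series onto the next piece, the eigenvalues on each quotient are sums of eigenvalues on $\mathfrak{l}^0/\mathfrak{l}^1$; this is cleaner than the paper's analytic route (Jordan-form expansions of $e^{t\,\mathrm{ad}(y_0)}$, the unboundedness Lemma \ref{lemma-13}, and Lemma \ref{lemma-4}). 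However, the heart of the theorem, $(1)\Rightarrow(2)$, has genuine gaps in your plan. First, in step (i) the only curvature tool available (Theorem \ref{cited-thm-3}, i.e.\ Lemma \ref{cor-1}) applies to a \emph{commuting} pair $u,v$ with $g_u(u,[\mathfrak{g},u])=0$; two vectors $y_1,y_2$ that are independent modulo $\mathfrak{l}^0$ need not commute, and modifying $y_2$ inside its coset cannot in general kill $[y_1,y_2]$, so "the flag curvature should turn out non-negative" is unsupported. The paper instead fixes one $u\in\mathfrak{g}\setminus\mathfrak{l}^0$ with $g_u(u,\mathfrak{l}^0)=0$ and argues on the rank of $\mathrm{ad}(u)$: if $\ker\mathrm{ad}(u)\neq\mathbb{R}u$ one gets a commuting pair containing $u$ and Lemma \ref{cor-1} contradicts negative curvature, whence $\dim[u,\mathfrak{g}]=\dim\mathfrak{g}-1\leq\dim\mathfrak{l}^0$.

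Second, and more seriously, your step (iii) is exactly the point you leave open ("the principal obstacle"), and the hoped-for repair does not work: with flag pole $y_0$ and $v\in\mathfrak{l}^0$ the vectors do not commute (indeed the conclusion forces $\mathrm{ad}(y_0)$ to be injective on $\mathfrak{l}^0$), so the commuting-pair formula is unavailable, and the paper explicitly notes that Heintze's computations (the $g$-symmetric/$g$-skew splitting you invoke) do not transfer to the Finsler setting. The paper's actual mechanism is different: it uses the linear submersion $\mathrm{pr}_1:(\mathfrak{l}^0,F|_{\mathfrak{l}^0})\rightarrow(\mathfrak{l}^0/\mathfrak{l}^1,\overline{F})$ and takes as flag poles the \emph{horizontal lifts} $u\in\mathfrak{l}^0$ of vectors $\overline{u}\neq0$; if $\overline{g}_{\overline{u}}(\overline{u},\mathrm{ad}_{\mathfrak{l}^0/\mathfrak{l}^1}(y_0)\overline{u})=0$, then $g_u(u,[\mathfrak{g},u])=0$ and $u$ commutes with a suitable $v$ (taken from the center $\mathfrak{l}^{k-1}$ of the nilpotent $\mathfrak{l}^0$, or from $\mathfrak{l}^0$ when it is abelian), contradicting negative curvature via Lemma \ref{cor-1}; hence this quantity is nonzero, and by connectedness positive after possibly replacing $y_0$ by $-y_0$ (Lemma \ref{lemma-11}). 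Even granting such positivity, your final inference is too quick: since the fundamental tensor $\overline{g}_{\overline{u}}$ varies with $\overline{u}$, positivity of $\overline{g}_{\overline{u}}(\overline{u},A\overline{u})$ does not directly give spectrum in the right half-plane; the paper bridges this with a Lyapunov-type estimate $\frac{d}{dt}\tfrac12\overline{F}^2\bigl(e^{t\,\mathrm{ad}_{\mathfrak{l}^0/\mathfrak{l}^1}(y_0)}\overline{u}\bigr)\geq c\,\overline{F}^2$, forcing exponential growth of every orbit, and then the elementary spectral Lemma \ref{lemma-4}. Without these ingredients (horizontal lifts as flag poles, the commuting-vector obstruction, and the growth argument), your $(1)\Rightarrow(2)$ remains a program rather than a proof.
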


To summarize, the HKW theory can still guide us study the negative curvature problem in homogeneous Finsler geometry.


The proof of Theorem \ref{main-thm-1} is very similar to that for
its analog in Riemannian geometry, which is sketchy in the literatures. To make this paper more self contained, we supply the details.
The proof of Theorem \ref{main-thm-2} is very different from that in \cite{He1974}, because
most calculations there are no longer valid in the Finsler context.
Here we use a homogeneous flag curvature formula (see Theorem \ref{cited-thm-3} or Theorem 4.1 in \cite{XDHH2017}),
which qualitatively indicates where to find
a non-negative flag curvature, and we refine the argument which proves Theorem 1.3 in \cite{XD2017} with a linear submersion and careful algebraic discussion.

This paper is scheduled as follows. In Section 2, we summarize some necessary knowledge in
general and homogeneous Finsler geometry. In Section 3, we prove Theorem \ref{main-thm-1}. In Section 4, we prove Theorem \ref{main-thm-2}.
\section{Preliminaries in general and homogeneous Finsler geometries}
\subsection{Minkowski norm and linear submersion}
\label{subsection-2-1}
A {\it Minkowski norm} on a finite dimensional real vector space $\mathbf{V}$ is a continuous function $F:\mathbf{V}\rightarrow[0,+\infty)$ satisfying \cite{BCS2000}:
\begin{enumerate}
\item Regularity: $F|_{\mathbf{V}\backslash\{0\}}$ is a positive smooth function;
\item Positive 1-homogeneity: $F(\lambda y)=\lambda F(y)$, $\forall \lambda\geq0,y\in\mathbf{V}$;
\item Strong convexity: for each $y\in\mathbf{V}\backslash\{0\}$, the {\it fundamental tensor}
$$g_y(u,v)=\frac12\frac{\partial^2}{\partial s\partial t}|_{s=t=0}F^2(y+su+tv),\quad\forall u,v\in\mathfrak{m},$$
is an inner product on $\mathbf{V}$.
\end{enumerate}

Let $F$ and $\overline{F}$ be the Minkowski norms on $\mathbf{V}$ and $\overline{\mathbf{V}}$
respectively. The surjective real linear map $l:\mathbf{V}\rightarrow\overline{\mathbf{V}}$ is called
 a {\it linear submersion} from $F$ to $\overline{F}$, if \cite{AD2001}
 $$\inf\{F(v)| l(v)=\overline{v}\}=\overline{F}(\overline{v}),\quad\forall \overline{v}\in\overline{\mathbf{V}}.$$
For each $F$ and $l$ as mentioned above, there exists a unique $\overline{F}$ on $\mathbf{V}$ such that $l$ is a submersion between Minkowski norms. We call this $\overline{F}$ the {\it Minkowski norm induced by submersion} from $F$ and $l$. The following lemma is useful in later discussion.

Suppose $l:(\mathbf{V},F)\rightarrow(\overline{\mathbf{V}},\overline{F})$ is a linear submersion between Minkowski norms. Then for each $\overline{u}\in\overline{\mathbf{V}}\backslash\{0\}$, there exists a unique $u\in l^{-1}(\overline{u})$ satisfying $F(u)=\overline{F}(\overline{u})$. Denote by $g_u(\cdot,\cdot)$ and $\overline{g}_{\overline{u}}(\cdot,\cdot)$ the fundamental tensors for $F$
and $\overline{F}$ respectively, then this $u\in l^{-1}(\overline{u})$, which is called the {\it horizonal lifting} of $\overline{u}$, can be alternatively determined by $g_u(u,\ker l)=0$. Further more, we have
the following lemma, which is a reformulation of Proposition 2.2 in \cite{AD2001}.
\begin{lemma} \label{lemma-5}
The linear map $l$ induces a linear isometry from the inner product $g_u(\cdot,\cdot)$ on the $g_u(\cdot,\cdot)$-orthogonal complement of $\ker l$ to the inner product $\overline{g}_{\overline{u}}(\cdot,\cdot)$ on $\overline{\mathbf{V}}$.
\end{lemma}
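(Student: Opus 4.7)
The plan is to reduce the statement to the quadratic identity $g_u(v_1, v_1) = \overline{g}_{\overline{u}}(\overline{u}_1, \overline{u}_1)$ for every $v_1$ in the $g_u$-orthogonal complement of $\ker l$ with $l(v_1) = \overline{u}_1$; polarization of this identity then yields the full bilinear equality. The fact that $l$ restricts to a linear isomorphism from the $g_u$-orthogonal complement of $\ker l$ onto $\overline{\mathbf{V}}$ is elementary by dimension count, since this complement meets $\ker l$ only at $0$.

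For the quadratic identity, I would work with the smooth horizontal lift $H$ defined on a neighborhood of $\overline{u}$ in $\overline{\mathbf{V}} \setminus \{0\}$ and the curve $\gamma(s) = H(\overline{u} + s\overline{u}_1)$, which by construction satisfies $\gamma(0) = u$, $l(\gamma(s)) = \overline{u} + s\overline{u}_1$, and $F^2(\gamma(s)) = \overline{F}^2(\overline{u} + s\overline{u}_1)$. The crucial preliminary step is to show $\gamma'(0) = v_1$. Since $l\circ\gamma$ is affine one has $l(\gamma'(0)) = \overline{u}_1$, so it is enough to check that $\gamma'(0)$ lies in the $g_u$-orthogonal complement of $\ker l$. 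This follows by differentiating $g_{\gamma(s)}(\gamma(s), w) = 0$ (valid for each fixed $w \in \ker l$) at $s = 0$; the term arising from the dependence of $g_{\gamma(s)}$ on its basepoint is proportional to $C_u(\gamma'(0), u, w)$, where $C$ denotes the Cartan tensor, and this vanishes because $C_u$ is zero whenever one of its arguments equals $u$ (a direct consequence of the $0$-homogeneity $g_{\lambda y} = g_y$ for $\lambda > 0$). What remains is $g_u(\gamma'(0), w) = 0$, giving $\gamma'(0) \in (\ker l)^{\perp_{g_u}}$ and hence $\gamma'(0) = v_1$.

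With this in hand, I would compute $\frac{d^2}{ds^2}\big|_{s=0} F^2(\gamma(s))$ starting from $\frac{d}{ds} F^2(\gamma(s)) = 2 g_{\gamma(s)}(\gamma(s), \gamma'(s))$. Differentiating once more at $s=0$ and again discarding the Cartan contribution $C_u(v_1, u, v_1) = 0$, the outcome collapses to $2 g_u(v_1, v_1) + 2 g_u(u, \gamma''(0))$. The second term vanishes because $l\circ\gamma$ being affine forces $\gamma''(0) \in \ker l$, while $u$ is $g_u$-orthogonal to $\ker l$ by the horizontal lift characterization recorded just before the lemma. Matching this with $\frac{d^2}{ds^2}\big|_{s=0} \overline{F}^2(\overline{u} + s\overline{u}_1) = 2 \overline{g}_{\overline{u}}(\overline{u}_1, \overline{u}_1)$ closes the argument. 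The main obstacle is the careful bookkeeping of the Cartan tensor contributions together with the smoothness of $H$, the latter being obtainable from the implicit function theorem applied to the horizontal lift equation $g_{H(\overline{v})}(H(\overline{v}), \ker l) = 0$; without the smoothness of $H$, the variational inequality $F^2(u + sv_1) \geq \overline{F}^2(\overline{u} + s\overline{u}_1)$ alone only delivers the one-sided bound $g_u(v_1, v_1) \geq \overline{g}_{\overline{u}}(\overline{u}_1, \overline{u}_1)$.
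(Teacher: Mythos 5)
Your argument is correct, and it is worth noting that the paper itself gives no proof of this lemma at all: it simply quotes it as a reformulation of Proposition 2.2 of \'{A}lvarez Paiva--Dur\'{a}n \cite{AD2001}. So your proposal supplies a self-contained derivation where the paper relies on a citation. The route you take --- a second-order variation along the smooth local horizontal lift $\gamma(s)=H(\overline{u}+s\overline{u}_1)$, with the Cartan terms killed by $C_u(\cdot,u,\cdot)=0$ and the term $g_u(u,\gamma''(0))$ killed by $\gamma''(0)\in\ker l$ together with the horizontality $g_u(u,\ker l)=0$ --- is sound, and your closing remark is exactly right: the bare minimizing property only yields the one-sided bound $g_u(v_1,v_1)\geq\overline{g}_{\overline{u}}(\overline{u}_1,\overline{u}_1)$, so the smoothness of $H$ is what converts the inequality into the equality of second derivatives. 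Two points you should make explicit to complete the write-up, neither of which is a gap: (i) the invertibility of the differential needed for the implicit function theorem applied to $v\mapsto\bigl(l(v),\,g_v(v,\cdot)|_{\ker l}\bigr)$ at $u$, which follows from the same identity $C_u(\cdot,u,\cdot)=0$ (so the differential is $x\mapsto(l(x),g_u(x,\cdot)|_{\ker l})$) together with positive definiteness of $g_u$ on $\ker l$; and (ii) the identification of the IFT solution with the genuine minimizing lift, which uses the uniqueness of the solution of $g_v(v,\ker l)=0$ in each fiber $l^{-1}(\overline{v})$ --- this is exactly the alternative characterization of the horizontal lift recorded in the paper just before the lemma (and follows from strict convexity of $F^2$ on fibers avoiding the origin), and it is what guarantees $F^2(\gamma(s))=\overline{F}^2(\overline{u}+s\overline{u}_1)$ identically in $s$. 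With these spelled out, the polarization step finishes the bilinear identity as you say.
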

\subsection{Finsler metric and flag curvature}
A {\it Finsler metric} on a smooth manifold $M$ is a continuous function $F: TM\rightarrow[0,+\infty)$, such that $F|_{TM\backslash0}$ is a positive smooth function
and $F(x,\cdot)=F|_{T_xM}$ for each $x\in M$ is a Minkowski norm on $T_xM$ \cite{Sh2001}.

At any point $x$ in a Finsler manifold $(M,F)$, the {\it flag curvature} for the vector $y\in T_xM\backslash\{0\}$ (i.e., the {\it flag pole}) and the tangent plane $\mathbf{P}=\mathrm{span}^\mathbb{R}\{y,u\}\subset T_xM$ (i.e., the {\it flag})
is defined as
$$K(x,y,\mathbf{P})=\frac{g_y(R_y(u),u)}{g_y(y,y)g_y(u,u)-g_y(y,u)^2},$$
in which $R_y:T_xM\rightarrow T_xM$ is the Riemann curvature operator.
When $F$ is Riemannian, the flag curvature coincides with the sectional curvature, which is irrelevant to the choice of $y$. See \cite{Sh2001} for the details.

\subsection{Homogeneous Finsler manifold and a flag curvature formula}
A Finsler manifold $(M,F)$ is called {\it homogeneous}
if its isometry group $I(M,F)$ acts transitively on
$M$ \cite{De2012}.  Since $I(M,F)$ is a Lie transformation group \cite{DH2002}, we can present the homogeneous manifold
$M$ as $M=G/H$. Here $G$ is a Lie subgroup of $I(M,F)$ which acts transitively on $M$,
and $H$ is the isotropy subgroup at the origin $o=eH\in G/H=M$.
When $M$ is connected, we may require $G$ to be connected as well.

A decomposition
$\mathfrak{g}=\mathfrak{h}+\mathfrak{m}$ with $\mathfrak{g}=\mathrm{Lie}(G)$ and $\mathfrak{h}=\mathrm{Lie}(H)$ is called a {\it reductive} decomposition for $(G/H,F)$ if it is
$\mathrm{Ad}(H)$-invariant (in the Lie algebraic level, it implies $[\mathfrak{h},\mathfrak{m}]\subset\mathfrak{m}$).
The following lemma provides a canonical reductive decomposition.

\begin{lemma}\label{lemma-9}The orthogonal decomposition $\mathfrak{g}=\mathfrak{h}+\mathfrak{m}$ with respect to the Killing form of $\mathfrak{g}$
is a reductive composition, such that the maximal nilpotent ideal of $\mathfrak{g}$ is contained in
$\mathfrak{m}$.
\end{lemma}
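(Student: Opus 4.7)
The plan is to handle the two claims separately: first verify that the Killing-form orthogonal complement $\mathfrak{m}=\mathfrak{h}^\perp$ is a genuine $\mathrm{Ad}(H)$-invariant complement, then show the nilradical $\mathfrak{n}$ sits inside $\mathfrak{m}$ via a trace calculation.

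For the first claim, I would exploit that, since $G\subset I(M,F)$ acts effectively with isotropy $H$ at $o$, the isotropy representation embeds $H$ faithfully into the linear isometry group of the Minkowski norm on $T_oM$, which is compact; hence $H$ is compact. In particular, every $\mathrm{ad}_\mathfrak{g}(y)$ with $y\in\mathfrak{h}$ is skew with respect to some $\mathrm{Ad}(H)$-invariant inner product on $\mathfrak{g}$, and thus semisimple over $\mathbb{C}$ with purely imaginary spectrum. Then $B_\mathfrak{g}(y,y)=\mathrm{tr}(\mathrm{ad}_\mathfrak{g}(y)^2)\le 0$, with equality only if $\mathrm{ad}_\mathfrak{g}(y)=0$, i.e.\ $y\in Z(\mathfrak{g})\cap\mathfrak{h}$. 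Effectiveness prevents $\mathfrak{h}$ from containing a non-zero ideal of $\mathfrak{g}$, so $Z(\mathfrak{g})\cap\mathfrak{h}=0$ and $B_\mathfrak{g}|_\mathfrak{h}$ is negative definite. This forces $\mathfrak{g}=\mathfrak{h}\oplus\mathfrak{m}$, and the $\mathrm{Ad}(H)$-invariance of $\mathfrak{m}$ follows from that of $\mathfrak{h}$ combined with the $\mathrm{Ad}(G)$-invariance of $B_\mathfrak{g}$.

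For $\mathfrak{n}\subset\mathfrak{m}$, it suffices to check $B_\mathfrak{g}(x,y)=0$ for every $x\in\mathfrak{n}$, $y\in\mathfrak{h}$. Since $\mathfrak{n}$ is an ideal, $\mathrm{ad}_\mathfrak{g}(x)$ maps $\mathfrak{g}$ into $\mathfrak{n}$, and on $\mathfrak{n}$ it is nilpotent; hence $\mathrm{ad}_\mathfrak{g}(x)$ is nilpotent on all of $\mathfrak{g}$. Decomposing $\mathfrak{g}_\mathbb{C}$ into $\mathrm{ad}(y)$-eigenspaces $V_\lambda$ and correspondingly splitting $x=x_0+\sum_{\mu\ne 0}x_\mu$ with each $x_\mu\in V_\mu\cap\mathfrak{n}_\mathbb{C}$, I would show that $\mathrm{ad}(x_\mu)\mathrm{ad}(y)$ for $\mu\ne 0$ sends $V_\lambda$ into $V_{\lambda+\mu}\ne V_\lambda$ and hence has zero diagonal and vanishing trace, while $\mathrm{ad}(x_0)$ commutes with $\mathrm{ad}(y)$ (because $[y,x_0]=0$), so $\mathrm{ad}(x_0)\mathrm{ad}(y)$ inherits nilpotency from $\mathrm{ad}(x_0)$ and is likewise traceless. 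Summing yields $B_\mathfrak{g}(x,y)=0$.

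The main obstacle is getting the two ingredients to cooperate: I need $\mathrm{ad}(x)$ to be genuinely nilpotent on all of $\mathfrak{g}$ (using that $\mathfrak{n}$ absorbs every bracket with $\mathfrak{g}$) and $\mathrm{ad}(y)$ to be semisimple on $\mathfrak{g}_\mathbb{C}$ (using compactness of $H$, which is specific to the Finsler setting rather than abstract Lie theory). The eigenspace bookkeeping above then delivers a clean trace argument without invoking the Jordan decomposition explicitly, but its validity rests on the combined structural input from both sides.
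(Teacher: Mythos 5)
Your proof is correct, and in fact the paper itself gives no proof for this lemma, only a pointer to the Riemannian analog in [XY2021] with the remark that the argument transfers. Your first half is the standard argument: compactness of $H$ (which holds because the isotropy representation embeds $H$ in the compact linear isometry group of the Minkowski norm on $T_oM$) gives an $\mathrm{Ad}(H)$-invariant inner product, making each $\mathrm{ad}(y)$, $y\in\mathfrak{h}$, skew-symmetric, hence $B|_\mathfrak{h}$ negative semi-definite with kernel $Z(\mathfrak{g})\cap\mathfrak{h}$; effectiveness of the $G$-action (forcing $\mathfrak{h}$ to contain no nonzero ideal) kills this kernel, so $B|_\mathfrak{h}$ is negative definite, $\mathfrak{g}=\mathfrak{h}\oplus\mathfrak{h}^{\perp}$, and $\mathfrak{h}^{\perp}$ inherits $\mathrm{Ad}(H)$-invariance from the $\mathrm{Ad}(G)$-invariance of $B$.

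For the second half, however, you are working harder than necessary, and your ``main obstacle'' paragraph mis-identifies where the geometric input is needed. The eigenspace decomposition of $\mathrm{ad}(y)$ -- and hence the semisimplicity of $\mathrm{ad}(y)$ that you derive from compactness of $H$ -- plays no role in showing $B(\mathfrak{n},\mathfrak{h})=0$. The nilradical $\mathfrak{n}$ lies in the radical of the Killing form of \emph{any} Lie algebra: for $x\in\mathfrak{n}$ and arbitrary $y\in\mathfrak{g}$, each term $C^j(\mathfrak{n})$ of the lower central series of $\mathfrak{n}$ is an ideal of $\mathfrak{g}$ (an easy Jacobi-identity induction), $\mathrm{ad}(y)$ preserves $C^j(\mathfrak{n})$, and $\mathrm{ad}(x)$ sends $C^j(\mathfrak{n})$ into $C^{j+1}(\mathfrak{n})$, so $\bigl(\mathrm{ad}(x)\mathrm{ad}(y)\bigr)^{j+1}(\mathfrak{g})\subset C^j(\mathfrak{n})$; as $\mathfrak{n}$ is nilpotent, $\mathrm{ad}(x)\mathrm{ad}(y)$ is itself nilpotent and $B(x,y)=\mathrm{tr}\bigl(\mathrm{ad}(x)\mathrm{ad}(y)\bigr)=0$. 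So your eigenspace bookkeeping, while valid, is a detour; the only genuinely Finsler-geometric ingredient is the compactness of $H$ in the first half, and the nilradical statement is pure Lie algebra.
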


The Riemannian analog of Lemma \ref{lemma-9} can be found in \cite{XY2021}. Its proof can be naturally transferred to the Finsler context.

The subspace $\mathfrak{m}$ in a reductive decomposition for $(G/H,F)$ can be canonically identified with the tangent space $T_o(G/H)$, such that the $\mathrm{Ad}(H)$-action on $\mathfrak{m}$ coincides with the isotropic $H$-action on $T_o(G/H)$. Then the $G$-invariant Finsler metric $F$ on $G/H$ can be one-to-one corresponded to its restriction to $T_o(G/H)$, which is
any arbitrary $\mathrm{Ad}(H)$-invariant Minkowski norm on $\mathfrak{m}$.
For simplicity, we still use $F$ and $g_y(\cdot,\cdot)$ to denote this norm and its fundamental tensor respectively. See \cite{De2012} for more detailed discussion in homogeneous Finsler geometry.

By homogeneity, we only need to discuss the curvatures of a homogeneous Finsler manifold $(G/H,F)$  at the origin. The following homogeneous flag curvature formula (see Theorem 4.1 in \cite{XDHH2017})  played an important role when we classified positively curved homogeneous Finsler manifolds \cite{DX2017}.

\begin{theorem}\label{cited-thm-3}
Let $(G/H,F)$ be a homogeneous Finsler manifold with the reductive decomposition
$\mathfrak{g}=\mathfrak{h}+\mathfrak{m}$. Suppose that $u$ and $v$ are a commuting pair of linearly independent vectors in $\mathfrak{m}$ and assume $g_u(u,[u,\mathfrak{m}]_\mathfrak{m})=0$. Then  for  $\mathbf{P}=\mathrm{span}\{u,v\}$,   we have
$$K(o,u,\mathbf{P})=\frac{g_y(U(u,v),U(u,v))}{g_y(u,u)g_y(v,v)-g_y(u,v)^2},$$
where $U(u,v)\in\mathfrak{m}$ is determined by
$$2g_{u}(U(u,v),w)=g_{u}([u,w]_\mathfrak{m},v)+
g_{u}(u,[v,w]_\mathfrak{m}),\quad\forall w\in\mathfrak{m}.$$
\end{theorem}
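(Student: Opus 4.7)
My plan is to establish the formula via a Killing--Jacobi field argument adapted to the Finsler setting. First, I would unpack the hypotheses geometrically. The vanishing $g_u(u,[u,\mathfrak{m}]_\mathfrak{m})=0$ is the standard Finsler criterion for $u$ to be a \emph{geodesic vector} at $o$, i.e., for the orbit $\gamma(t)=\exp(tu)\cdot o$ to be an $F$-geodesic. The commutativity $[u,v]=0$ ensures $\exp(sv)\exp(tu)=\exp(tu)\exp(sv)$, so $\gamma_s(t)=\exp(sv)\cdot\gamma(t)$ is a one-parameter variation of $\gamma$ through $F$-geodesics (each being the image of $\gamma$ under the isometry $\exp(sv)$). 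Hence $J(t)=\partial_s|_{s=0}\gamma_s(t)$ is a Jacobi field along $\gamma$ with $J(0)=v$. Transporting back to $\mathfrak{m}$ via the moving frame $d\exp(tu)\colon\mathfrak{m}\to T_{\gamma(t)}(G/H)$ and using $\mathrm{Ad}(\exp(-tu))v=v$ (which holds because $[u,v]=0$), one sees that $J(t)\equiv v$ as a ``constant'' field in this frame.

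The second step is to evaluate the Chern-covariant derivative $D_uJ$ at $t=0$. Because $J$ is constant in the $G$-invariant frame while that frame is not Chern-parallel along $\gamma$, the quantity $D_uJ(0)$ is a pure connection term. I would derive it from a Koszul-type identity for the Chern connection of a $G$-invariant Finsler metric at $o$ in the flag direction $u$: after using $\mathrm{Ad}(H)$-invariance of $F$, the $G$-invariance of the pulled-back fundamental tensor along $\gamma$, and the geodesic hypothesis to drop boundary and Cartan-tensor contributions, the identity reduces to
\[
2g_u\bigl(D_uJ(0),w\bigr)=g_u([u,w]_\mathfrak{m},v)+g_u(u,[v,w]_\mathfrak{m})\qquad\forall w\in\mathfrak{m},
\]
which is precisely the defining relation of $U(u,v)$. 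Hence $D_uJ(0)=U(u,v)$.

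To finish, I would invoke the Jacobi energy identity $\frac{d^2}{dt^2}g_u(J,J)=2g_u(D_uJ,D_uJ)-2g_u(R_u(J),J)$ along $\gamma$. Since $J\equiv v$ in the moving frame and $g_u$ is $G$-invariant along $\gamma$, one has $g_u(J(t),J(t))\equiv g_u(v,v)$, so the left-hand side vanishes identically. Evaluating at $t=0$ yields $g_u(R_u(v),v)=g_u(U(u,v),U(u,v))$, and dividing by $g_u(u,u)g_u(v,v)-g_u(u,v)^2$ gives the stated flag-curvature formula. The main obstacle I foresee is Finsler-specific: the Chern connection depends nonlinearly on the reference direction, so the Koszul-type identity in Step~2 must be established carefully, verifying that the Cartan-tensor corrections (which are absent in the Riemannian case) really do drop out under the stated hypotheses; one must also justify the energy identity in Step~3 for the Chern connection along a Finsler geodesic, which rests on the almost-metric compatibility of the Chern connection with $g_{\dot\gamma}$ when $\dot\gamma$ is the reference direction.
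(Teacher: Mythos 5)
Your overall strategy is sound, but note first that this paper contains no proof of Theorem \ref{cited-thm-3}: it is quoted as Theorem 4.1 of \cite{XDHH2017}, where it is obtained by a purely Lie-theoretic computation based on L.~Huang's structure equations for homogeneous Finsler spaces \cite{Hu2015-2} (the spray vector field $\eta(y)\in\mathfrak{m}$ with $g_y(\eta(y),w)=g_y(y,[w,y]_\mathfrak{m})$, the connection operator $N(y,\cdot)$, and Huang's formula for $R_y$), not by a Jacobi-field argument. Your route is genuinely different and viable: the hypothesis $g_u(u,[u,\mathfrak{m}]_\mathfrak{m})=0$ is exactly $\eta(u)=0$, so $\gamma(t)=\exp(tu)\cdot o$ is a geodesic; since $[u,v]=0$, the field $J(t)=d\tau_{\exp(tu)}(v)$ (the Killing field of $v$ along $\gamma$) is a Jacobi field with $g_{\dot\gamma(t)}(J(t),J(t))\equiv g_u(v,v)$ by invariance; and the identity $\tfrac{d^2}{dt^2}g_{\dot\gamma}(J,J)=2g_{\dot\gamma}(D_{\dot\gamma}J,D_{\dot\gamma}J)-2g_{\dot\gamma}(R_{\dot\gamma}(J),J)$ is legitimate for the Chern connection with reference vector $\dot\gamma$ along a geodesic. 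So Steps~1 and~3 are correct as stated.

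The genuine gap is Step~2, and it is where all the Finsler-specific content of the theorem sits. You assert that $2g_u(D_{\dot\gamma}J(0),w)=g_u([u,w]_\mathfrak{m},v)+g_u(u,[v,w]_\mathfrak{m})$ follows from a ``Koszul-type identity'' after dropping Cartan-tensor contributions, but no such identity is derived, and none can simply be quoted: the Chern connection is not metric-compatible and its coefficients depend on the reference direction, so there is no Koszul formula to invoke. Establishing the needed identity in the invariant frame is essentially equivalent to Huang's connection-operator formula, i.e.\ to the machinery on which the actual proof in \cite{XDHH2017} rests. Concretely, writing vector fields along $\gamma$ as $W(t)=d\tau_{\exp(tu)}(w(t))$, the covariant derivative with reference $\dot\gamma$ is $d\tau_{\exp(tu)}(\dot w+N(u,w))$ (up to sign conventions; see \cite{Hu2015-2,Xu2023}), where $2g_u(N(u,x),z)=g_u([z,u]_\mathfrak{m},x)+g_u([z,x]_\mathfrak{m},u)+g_u([u,x]_\mathfrak{m},z)-2C_u(x,z,\eta(u))$. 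The Cartan term vanishes because $\eta(u)=0$ (it is the geodesic-vector hypothesis, not $G$-invariance, that kills it), the third term vanishes for $x=v$ because $[u,v]=0$, and one gets $D_{\dot\gamma}J(0)=N(u,v)=\pm U(u,v)$, after which your Step~3 closes the argument. So your outline can be completed, but as written its central step presupposes exactly the homogeneous connection formula whose derivation is the substance of the proof; you must either prove that formula (from torsion-freeness and almost-$g$-compatibility of the Chern connection in the invariant frame) or cite \cite{Hu2015-2} for it explicitly.
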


When a Lie group $G$ is viewed as the homogeneous manifold $G/H=G/\{e\}$, the corresponding
homogeneous Finsler metric is called {\it left invariant}. In this situation, the reductive decomposition is unique, i.e., $\mathfrak{g}=\mathfrak{h}+\mathfrak{m}=0+\mathfrak{g}$, so we have the following immediate corollary of Theorem \ref{cited-thm-3}.

\begin{lemma}\label{cor-1}
Let $F$ be a left invariant Finsler metric on the Lie group $G$. Suppose that there exist a commuting pair of linearly independent vectors $u$ and $v$ in $\mathfrak{g}=\mathrm{Lie}(G)$, which satisfies $g_u(u,[\mathfrak{g},u])=0$, then $(G,F)$ is not negatively curved.
\end{lemma}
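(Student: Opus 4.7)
The plan is to apply Theorem \ref{cited-thm-3} directly to the commuting pair $(u,v)$ and read off from the resulting formula that the flag curvature of the plane $\mathbf{P}=\mathrm{span}^{\mathbb{R}}\{u,v\}$ with pole $u$ is non-negative, contradicting the defining requirement that the flag curvature be strictly negative everywhere.

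First I would specialize the reductive decomposition. When $G$ is viewed as the homogeneous space $G/\{e\}$, the unique reductive decomposition is $\mathfrak{g}=0+\mathfrak{g}$, so $\mathfrak{m}=\mathfrak{g}$ and the $\mathfrak{m}$-projection in the bracket is the identity, i.e., $[u,\mathfrak{m}]_\mathfrak{m}=[u,\mathfrak{g}]$. Since $[\mathfrak{g},u]$ and $[u,\mathfrak{g}]$ differ only by a sign and hence coincide as subspaces of $\mathfrak{g}$, the hypothesis $g_u(u,[\mathfrak{g},u])=0$ is precisely the orthogonality condition $g_u(u,[u,\mathfrak{m}]_\mathfrak{m})=0$ required by Theorem \ref{cited-thm-3}. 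Together with the assumption that $u,v$ commute and are linearly independent, this puts us in position to invoke the theorem.

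Next I would analyse the curvature formula
\[
K(o,u,\mathbf{P}) = \frac{g_u(U(u,v),U(u,v))}{g_u(u,u)g_u(v,v)-g_u(u,v)^2}.
\]
The numerator is non-negative because $g_u$ is an inner product on $\mathfrak{g}$, and the denominator is strictly positive by the Cauchy--Schwarz inequality applied to the linearly independent pair $u,v$ in the inner product space $(\mathfrak{g},g_u)$. Hence $K(o,u,\mathbf{P})\geq 0$, exhibiting a flag at the origin whose curvature is not strictly negative, so $(G,F)$ fails to be negatively curved.

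I do not foresee any real obstacle: the lemma is essentially a direct corollary of Theorem \ref{cited-thm-3} specialized to the trivial isotropy case $\mathfrak{h}=0$. The only bookkeeping step is matching the two formulations of the orthogonality hypothesis, which is immediate from the antisymmetry of the Lie bracket.
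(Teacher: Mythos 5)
Your proposal is correct and is exactly the argument the paper has in mind: the paper states the lemma as an ``immediate corollary'' of Theorem~\ref{cited-thm-3} after specializing the reductive decomposition to $\mathfrak{h}=0$, $\mathfrak{m}=\mathfrak{g}$, and the rest is reading off the sign of the resulting quotient, as you do. Your observation that $[\mathfrak{g},u]$ and $[u,\mathfrak{g}]$ coincide as subspaces, so the orthogonality hypothesis matches the one in the theorem, is the only bookkeeping the paper leaves implicit, and you also correctly interpret the paper's $g_y$ in the displayed formula as $g_u$.
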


See \cite{Hu2015-1,Hu2015-2,Xu2021,Xu2022,Xu2023} for more homogeneous curvature formulae in homogeneous Finsler geometry and homogeneous spray geometry.

\subsection{Negatively curved homogeneous Finsler manifold}

%

Let $(M,F)$ be a connected negatively curved homogeneous Finsler manifold. Then it is complete.
The main theorem in \cite{DH2007} tells us
\begin{theorem}\label{lemma-2}
Any connected negatively curved homogeneous Finsler manifold must be simply connected.
\end{theorem}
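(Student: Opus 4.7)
The plan is to lift to the universal Finsler cover and derive a contradiction from the assumption that the deck group is nontrivial. Let $\pi:(\widetilde{M},\widetilde{F})\to(M,F)$ denote the covering, and let $\Gamma=\pi_1(M,o)$ act freely and properly discontinuously on $\widetilde{M}$ by $\widetilde{F}$-isometries. The first step is to observe that the identity component $G$ of $I(M,F)$ lifts to a connected Lie group $\widetilde{G}$ acting transitively on $\widetilde{M}$ by $\widetilde{F}$-isometries, and that $\widetilde{G}$ centralizes $\Gamma$. Indeed, for any $\widetilde{g}\in\widetilde{G}$ and $\gamma\in\Gamma$ the element $\widetilde{g}\gamma\widetilde{g}^{-1}\gamma^{-1}$ projects to the identity on $M$ and hence lies in $\Gamma$; since $\widetilde{G}$ is connected and $\Gamma$ is discrete, the map $\widetilde{g}\mapsto\widetilde{g}\gamma\widetilde{g}^{-1}$ is constant, and evaluating at $\widetilde{g}=e$ gives $\widetilde{g}\gamma=\gamma\widetilde{g}$.

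Next I would invoke the Finsler Cartan--Hadamard theorem, which applies because $(\widetilde{M},\widetilde{F})$ is forward and backward complete by homogeneity and has strictly negative flag curvature. This gives that each $\exp_p:T_p\widetilde{M}\to\widetilde{M}$ is a diffeomorphism and that $\widetilde{M}$ has no conjugate points. The key analytic consequence I need is strict convexity along forward geodesics of the displacement function
\[
\rho_\gamma(x):=d_{\widetilde{F}}(x,\gamma x)
\]
for any nontrivial $\gamma\in\Gamma$. To establish this I would take a forward geodesic $\sigma:[0,1]\to\widetilde{M}$ and consider the one-parameter family of minimizing geodesics $c_s$ from $\sigma(s)$ to $\gamma\sigma(s)$; the second variation of energy along this family, combined with strictly negative flag curvature applied to the associated Jacobi fields, yields strict convexity of the function $s\mapsto d_{\widetilde{F}}(\sigma(s),\gamma\sigma(s))^2$, hence of $\rho_\gamma$ itself, which is positive everywhere because $\Gamma$ acts freely.

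The punchline then combines both ingredients: since $\widetilde{G}$ commutes with $\gamma$ and acts by isometries,
\[
\rho_\gamma(\widetilde{g}x)=d_{\widetilde{F}}(\widetilde{g}x,\gamma\widetilde{g}x)=d_{\widetilde{F}}(\widetilde{g}x,\widetilde{g}\gamma x)=d_{\widetilde{F}}(x,\gamma x)=\rho_\gamma(x),
\]
so $\rho_\gamma$ is $\widetilde{G}$-invariant and therefore constant by transitivity of $\widetilde{G}$. But a strictly convex positive function on a manifold of positive dimension cannot be constant, which forces $\Gamma=\{e\}$. The main obstacle I foresee is a careful derivation of the strict convexity of $\rho_\gamma$ in the Finsler setting: the non-reversibility of $\widetilde{F}$ forces one to distinguish forward from backward minimizing geodesics between $\sigma(s)$ and $\gamma\sigma(s)$, to handle the asymmetry of the distance, and to verify that the second variation under strictly negative flag curvature, rather than a full sectional curvature, still delivers the required strict positivity of the second derivative.
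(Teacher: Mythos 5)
First, note that the paper does not prove this statement at all: Theorem \ref{lemma-2} is quoted as the main theorem of \cite{DH2007}, so the only honest comparison is with the standard Kobayashi--Wolf-type argument that \cite{DH2007} adapts to Finsler geometry. Your first half is exactly that classical reduction and is fine: lifting the identity component of the isometry group to the universal cover, observing by connectedness versus discreteness that the lifted group centralizes the deck group $\Gamma$, and concluding that every $\gamma\in\Gamma$ has constant displacement (is a Clifford--Wolf translation) on the homogeneous Cartan--Hadamard cover.

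The genuine gap is in the analytic core, and it is twofold. (i) Even in the Riemannian model the lemma you rely on is false as stated: for a hyperbolic isometry $\gamma$ of the hyperbolic plane, $\rho_\gamma$ is constant along the axis of $\gamma$, so $\rho_\gamma$ is \emph{not} strictly convex along every geodesic, and the punchline ``a strictly convex positive function cannot be constant'' collapses precisely in the case that has to be excluded. What negative curvature gives is convexity with an equality-case analysis (a flat-strip/axis dichotomy): constancy of $\rho_\gamma$ on all of $\widetilde{M}$ must force every geodesic to be an axis of $\gamma$, and then one contradicts free action (two distinct geodesics through a point meet only once in a Cartan--Hadamard manifold, and $\dim M\geq 2$). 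That extra step is not optional. (ii) More seriously, in the genuinely Finslerian (non-reversible, non-Berwald) setting even plain convexity of $s\mapsto d(\sigma(s),\gamma\sigma(s))$ does not follow from strictly negative flag curvature. In the second variation of length for the family of connecting geodesics $c_s$, the boundary terms are of the form $g_T(\nabla_X X,T)$ with reference vector $T=\dot c_s$; the endpoint curves $\sigma$ and $\gamma\sigma$ are geodesics only for their \emph{own} reference vectors, and these terms do not cancel when $T\neq\dot\sigma$, which is exactly why nonpositive flag curvature does not imply Busemann-type convexity of the distance along pairs of geodesics. Hilbert geometries, which have constant flag curvature $-1$, are the standard witnesses: the distance between two geodesics parameterized proportionally need not be convex unless the domain is an ellipsoid. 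So the argument you sketch would, if valid, prove a convexity statement that is false in general Finsler geometry; establishing what you actually need (triviality of Clifford--Wolf translations, equivalently of bounded isometries, on a negatively curved homogeneous Finsler space) requires a different mechanism, which is precisely the content of the cited Deng--Hou theorem rather than a routine second-variation computation.
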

By Cartan-Hadamard Theorem \cite{BCS2000}, $M$ is homeomorphic to an Euclidean space, which implies

\begin{lemma}\label{lemma-1}
A connected negatively curved homogeneous Finsler manifold $(M,F)$ can be presented as
$M=G/H$, where $G$ is the connected isometry group $I_0(M,F)$, and $H$ is a maximal
compact subgroup of $G$.
\end{lemma}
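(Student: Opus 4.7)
The plan is to establish two things in order: (a) the identity component $G=I_0(M,F)$ already acts transitively on $M$, and (b) the isotropy subgroup $H$ at a chosen origin $o$ is maximal compact in $G$. Completeness of $(M,F)$ is automatic by homogeneity, simple connectedness is given by Theorem \ref{lemma-2}, and the Cartan--Hadamard theorem cited in the excerpt then tells us that $M$ is diffeomorphic to a Euclidean space $\mathbb{R}^n$; these three facts will be used repeatedly.

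For (a), I would invoke that $I(M,F)$ is a Lie transformation group (as used already in Section 2.3) and consider the standard diffeomorphism $I(M,F)/I(M,F)_o\cong M$ induced by the transitive action. Since $I_0$ is open in $I(M,F)$, its image $I_0\cdot o\subseteq M$ is open; the same holds for every point, so the $I_0$-orbits partition $M$ into open sets. Connectedness of $M$ forces a single orbit, so $G=I_0$ acts transitively and $M=G/H$ with $H=G_o$. Compactness of $H$ is a standard consequence of two facts: isotropy acts faithfully by linear isometries on $(T_oM,F(o,\cdot))$, and the linear isometry group of a Minkowski norm is compact (it preserves the bounded, strongly convex unit ball, so it is a closed subgroup of $\mathrm{GL}(T_oM)$ contained in a ball with respect to any chosen inner product norm).

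For (b), I would use a topological fiber-bundle argument to avoid any Finsler-version of a center-of-mass theorem. Suppose $K\subseteq G$ is a compact subgroup with $H\subseteq K$. By the Iwasawa--Malcev theorem for connected Lie groups, $K$ is contained in some maximal compact subgroup $K'\subseteq G$, and $G/K'$ is diffeomorphic to some Euclidean space $\mathbb{R}^d$. The inclusions $H\subseteq K'\subseteq G$ give a smooth fiber bundle
\[
K'/H\;\longrightarrow\;G/H\;\longrightarrow\;G/K',
\]
that is, a bundle with fiber $K'/H$ over $\mathbb{R}^d$ whose total space is $M\cong\mathbb{R}^n$. The long exact homotopy sequence forces $K'/H$ to be weakly contractible. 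But $K'/H$ is a compact manifold, and a compact contractible manifold without boundary must be a single point (its top homology would otherwise be nontrivial). Hence $K'=H$, and therefore $K=H$, proving maximality.

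I do not expect a serious obstacle here; the entire argument is standard once Theorem \ref{lemma-2} and Cartan--Hadamard are in hand. The only mildly delicate point is the compactness of $H$, which requires the Minkowski-norm argument rather than a Riemannian inner-product argument, but the compactness of the linear isometry group of a Minkowski norm is well known and follows directly from strong convexity of $F(o,\cdot)$.
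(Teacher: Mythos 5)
Your plan is correct and follows essentially the route the paper intends: the paper derives Lemma \ref{lemma-1} from Theorem \ref{lemma-2} together with the Cartan--Hadamard theorem (so $M$ is contractible) and defers the details to the proof of Theorem 1.1 in \cite{XD2017}, which rests on the same compact-isotropy plus Mal'cev--Iwasawa/fibration reasoning you spell out in steps (a) and (b). The only point to tighten is the compactness of $H$: a faithful continuous homomorphism into the compact linear isometry group of the Minkowski norm does not by itself give compactness (one needs the image to be closed), so it is cleanest to invoke the standard fact that the isotropy subgroup $I_o(M,F)$ of a Finsler isometry group is compact (see \cite{DH2002} or \cite{De2012}) and take its intersection with $I_0(M,F)$.
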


The proof of Lemma \ref{lemma-1} is contained in the argument proving Theorem 1.1 in \cite{XD2017}.

When $M$ is a connected solvable Lie group and $F$ is a left invariant Riemannian metric,
E. Heintze proved \cite{He1974}

\begin{theorem}\label{cited-thm-1}
Let $G$ be a connected simply connected Lie group with a solvable Lie algebra
$\mathfrak{g}$, then it
admits a negatively curved left invariant Riemannian metric if and only if
$\dim_\mathbb{R}\mathfrak{g}=\dim_\mathbb{R}[\mathfrak{g},\mathfrak{g}]+1$ and
there exists $y_0\in\mathfrak{g}$ such that  $\mathrm{ad}(y_0)=[y_0,\cdot]:[\mathfrak{g},\mathfrak{g}]\rightarrow
[\mathfrak{g},\mathfrak{g}]$ only has eigenvalues with positive real parts.
\end{theorem}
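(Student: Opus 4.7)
The plan is to prove both implications by leveraging Lemma \ref{cor-1} as the main obstruction against negative curvature, together with an explicit inner product construction for the converse.

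For the necessity direction, suppose the left invariant Riemannian metric $F$ on $G$ is negatively curved, with corresponding inner product $\langle\cdot,\cdot\rangle$ on $\mathfrak{g}$. Define $\mathfrak{a}:=[\mathfrak{g},\mathfrak{g}]^\perp$. To prove $\dim\mathfrak{a}=1$, note that for any nonzero $u\in\mathfrak{a}$ the operator $\mathrm{ad}(u)$ sends $\mathfrak{g}$ into $[\mathfrak{g},\mathfrak{g}]$, so its kernel has codimension at most $\dim[\mathfrak{g},\mathfrak{g}]$, giving $\dim Z_\mathfrak{g}(u)\geq\dim\mathfrak{a}$. If $\dim\mathfrak{a}\geq 2$, pick $v\in Z_\mathfrak{g}(u)$ linearly independent from $u$; then $[u,v]=0$, and $[\mathfrak{g},u]\subseteq[\mathfrak{g},\mathfrak{g}]\perp u$ gives $\langle u,[\mathfrak{g},u]\rangle=0$, so Lemma \ref{cor-1} forces non-negative curvature on $\mathrm{span}(u,v)$, a contradiction. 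Hence $\dim\mathfrak{a}=1$, and we let $y_0$ be a unit generator. Set $A:=\mathrm{ad}(y_0)|_{[\mathfrak{g},\mathfrak{g}]}$. A zero eigenvalue of $A$ would provide $v\in[\mathfrak{g},\mathfrak{g}]\setminus\{0\}$ commuting with $y_0$; applying Lemma \ref{cor-1} to $(y_0,v)$ (using again $\langle y_0,[\mathfrak{g},y_0]\rangle=0$) yields a contradiction. Purely imaginary eigenvalues and inconsistent signs of real parts are excluded by a Ricci and sectional curvature analysis using the standard left invariant Riemannian Ricci formula on solvable groups; after possibly replacing $y_0$ by $-y_0$, all real parts become positive.

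For the sufficiency direction, given $\mathfrak{g}=\mathbb{R} y_0\oplus\mathfrak{n}$ with $\mathfrak{n}=[\mathfrak{g},\mathfrak{g}]$ nilpotent and $A:=\mathrm{ad}(y_0)|_\mathfrak{n}$ having all eigenvalues with positive real parts, one constructs an explicit left invariant Riemannian metric of strictly negative sectional curvature. Rescale $y_0$ so that each real part is at least $1$, then choose a real Jordan basis of $\mathfrak{n}$ and rescale each Jordan chain by geometric factors $\varepsilon^j$, so that in the rescaled basis $A=A_0+\varepsilon N$ where $A_0$ is real block-diagonal encoding the eigenvalues and $N$ is uniformly bounded. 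Declare this rescaled basis orthonormal, with $y_0\perp\mathfrak{n}$ and $\|y_0\|=1$. Applying the Koszul formula, the sectional curvature of any 2-plane $\mathrm{span}(X,Y)$ splits into a leading contribution whose sign is controlled by $-\langle\tfrac12(A+A^{t})X_\mathfrak{n},X_\mathfrak{n}\rangle\|Y\|^2$-type terms (and the symmetric counterpart), together with $O(\varepsilon)$ corrections from the nilpotent off-diagonals and from $[\mathfrak{n},\mathfrak{n}]$-brackets. For $\varepsilon$ small, $\tfrac12(A_0+A_0^{t})$ is positive definite, so the leading terms dominate and the sectional curvature is strictly negative on every 2-plane.

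The main technical obstacle is the sufficiency: verifying that the curvature remains uniformly negative on every 2-plane, including those mixing the $y_0$ and $\mathfrak{n}$ components in all possible ratios, requires careful case-splitting and uniform estimates as $\varepsilon\to 0$. The subtler part of necessity is excluding purely imaginary eigenvalues of $A$, which goes beyond Lemma \ref{cor-1} and needs an auxiliary Ricci-curvature argument on $A$-invariant subspaces of $[\mathfrak{g},\mathfrak{g}]$.
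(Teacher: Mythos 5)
This statement is Heintze's 1974 theorem, which the paper cites from \cite{He1974} but does not prove; it is used as a black box (e.g., for the implication (3)$\Rightarrow$(1) of Theorem \ref{main-thm-2}). So there is no in-paper proof to compare against, and I evaluate your sketch on its own.

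Your necessity argument begins well. The dimension count ($\dim\mathfrak{a}=1$) and the exclusion of a zero eigenvalue of $A=\mathrm{ad}(y_0)|_{[\mathfrak{g},\mathfrak{g}]}$ via Lemma \ref{cor-1} are correct, and in fact this is exactly the style of argument the paper uses in the proof of (1)$\Rightarrow$(2) of Theorem \ref{main-thm-2} (and in Lemma \ref{lemma-11}). But the crux of the necessity direction is excluding eigenvalues with nonpositive, nonzero real part and forcing a consistent sign, and there you write only that it is ``excluded by a Ricci and sectional curvature analysis using the standard left invariant Riemannian Ricci formula on solvable groups.'' That is not a proof; it is the hard step of the theorem, and you neither specify which 2-planes or which Ricci directions are used nor how the inequality is obtained. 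For contrast, the paper's Finsler generalization does this by a genuinely different route: it passes to the abelianization $\mathfrak{l}^0/\mathfrak{l}^1$ through a linear submersion (Lemma \ref{lemma-5}), proves $\overline{g}_{\overline{u}}(\overline{u},\mathrm{ad}_{\mathfrak{l}^0/\mathfrak{l}^1}(y_0)\overline{u})\neq0$ for all $\overline{u}\neq0$ (Lemma \ref{lemma-11}, again via Lemma \ref{cor-1} applied to the horizontal lift), fixes the sign by connectedness, derives an exponential lower bound $f(t)\geq e^{ct}f(0)$ for the flow $e^{t\,\mathrm{ad}}$, and concludes via the equivalence in Lemma \ref{lemma-4}. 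That is a global flow argument, not a Ricci estimate, and it works because one can always find a commuting partner for a horizontal lift $u\in\mathfrak{l}^0$ (using nilpotency of $\mathfrak{l}^0$). Your proposed shortcut skips precisely this step.

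The sufficiency direction is also incomplete in a substantive way. Rescaling a Jordan basis of $A$ by $\varepsilon^j$ makes the nilpotent part of $A$ small, but it does not by itself control the structure constants of the brackets $[\mathfrak{n},\mathfrak{n}]$: those are governed by the lower central series of $\mathfrak{n}$, not by the Jordan filtration of $A$, and a single geometric rescaling along $A$-Jordan chains need not shrink them. The standard construction (which is essentially Heintze's) uses a grading compatible with the lower central series $\mathfrak{n}\supset[\mathfrak{n},\mathfrak{n}]\supset\cdots$ so that $[V_i,V_j]\subset\bigoplus_{k\geq i+j}V_k$, and chooses $A$-invariant complements where possible, so that \emph{both} sources of error are $O(\varepsilon)$. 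You also acknowledge, but do not carry out, the uniform estimate over all 2-planes mixing $y_0$ and $\mathfrak{n}$. As written, neither direction of your argument is complete; the key ideas that make Heintze's proof work --- the sign-definiteness of the symmetrized action on a suitable quotient, and the two compatible gradings in the construction --- are exactly the pieces that are asserted rather than proved.
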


\section{Proof of Theorem \ref{main-thm-1}}
Suppose that $(M,F)$ is a connected negatively curved homogeneous Finsler manifold,
then Lemma \ref{lemma-1} provides $M=G/H$, where $H$ is a maximal compact subgroup of $G=I_0(M,F)$.

\begin{lemma}\label{lemma-6}
$G$ has a trivial center.
\end{lemma}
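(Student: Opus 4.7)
The plan is to rule out any non-identity central element in $G = I_0(M,F)$ by showing that such an element would have to be a Clifford translation of $M$, which is incompatible with strict negativity of the flag curvature.

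For any $z \in Z(G)$, the displacement function $f_z(x) = d(x, z \cdot x)$ is $G$-invariant: since $z$ commutes with every $g \in G$ and $G$ acts by isometries, $f_z(g \cdot x) = d(gx, zgx) = d(gx, gzx) = d(x, zx) = f_z(x)$. Transitivity of $G$ on $M$ forces $f_z$ to be a constant $c \geq 0$. If $c = 0$ then $z$ acts as the identity on $M$, and effectiveness of $G \hookrightarrow I(M,F)$ gives $z = e$; in particular $Z(G) \cap H = \{e\}$. For any torsion $z \in Z(G)$, the subgroup $\langle z, H\rangle = H \cdot \langle z\rangle$ is a finite union of $H$-cosets, hence compact, so by maximality of $H$, $z \in H$ and the previous case concludes.

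So it remains to handle the case $c > 0$, i.e., $z$ a non-trivial Clifford translation. For the continuous part $Z(G)^0$, take $X \in \mathrm{Lie}(Z(G))$ with $X \ne 0$; the corresponding Killing vector field is $G$-invariant (it commutes with every Killing field from $\mathfrak{g}$), so $|X|_F$ is constant on every $G$-orbit and hence constant on $M$. The Finsler analog of the classical identity ``a constant-length Killing field has geodesic integral curves'' then makes every integral curve of $X$ a geodesic, and any two such integral curves remain at constant distance under the isometric flow $\exp(tX)$. In a simply connected Finsler manifold of strictly negative flag curvature, such a parallel pair of geodesics must coincide, so $X$ has a single integral curve, contradicting its non-vanishing on a manifold of dimension $\geq 2$. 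An analogous Clifford-translation argument handles infinite-order discrete elements $z \in Z(G) \setminus Z(G)^0$: the axis of $z$ is a single geodesic preserved by $G$, and transitivity forces it to equal $M$, impossible for $\dim M \geq 2$.

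The main obstacles are (i) the Finsler analog of ``constant-length Killing field has geodesic integral curves'' and (ii) the non-existence of parallel or axis-filling geodesics in a strictly negatively curved Hadamard Finsler manifold; both have classical Riemannian versions but require care in the Finsler setting. If these analytical steps prove awkward, I would instead apply the homogeneous flag curvature formula (Theorem \ref{cited-thm-3}) directly: write $X = X_\mathfrak{h} + u$ via Lemma \ref{lemma-9} with $u \in \mathfrak{m}$, where $u \ne 0$ (else $\exp(tX) \in Z(G) \cap H = \{e\}$). The centrality $[X,\mathfrak{m}] = 0$ gives $[u,\mathfrak{m}]_\mathfrak{m} = -[X_\mathfrak{h}, \mathfrak{m}]$, and since $\mathrm{Ad}(H)$ preserves $F$ and fixes $u$ it preserves $g_u$, making $\mathrm{ad}(X_\mathfrak{h})|_\mathfrak{m}$ skew-symmetric with $[X_\mathfrak{h}, u] = 0$; this yields $g_u(u, [u,\mathfrak{m}]_\mathfrak{m}) = 0$. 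Picking any commuting $v \in \ker(\mathrm{ad}(u)|_\mathfrak{m}) \setminus \mathbb{R} u$ (which exists by the even codimension of the kernel of this skew operator whenever $\dim \mathfrak{m} \geq 2$), Theorem \ref{cited-thm-3} gives $K(o,u,\mathrm{span}\{u,v\}) \geq 0$, contradicting strict negative flag curvature.
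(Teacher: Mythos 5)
Your proposal takes a genuinely different route from the paper. The paper's proof is a covering-space argument: a non-trivial central element $\rho$ generates an infinite cyclic discrete group $\Gamma$ of Clifford--Wolf translations, the quotient $M/\Gamma$ inherits a negatively curved homogeneous Finsler metric, and this contradicts the Deng--Hou simple-connectedness result (Theorem~\ref{lemma-2}). So the whole lemma is reduced to a citation. Your first argument instead attacks the Clifford translations head-on, and your fallback uses the flag-curvature formula.

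Two concrete issues. For the analytic route, you yourself flag the two ingredients, ``constant-length Killing field implies geodesic integral curves'' and ``no parallel geodesics / axis must fill $M$ in strictly negative curvature.'' The first is available in the Finsler literature, but the second is a flat-strip-type statement whose Finsler version (especially for non-reversible $F$) is genuinely delicate and is not established in this paper; resting a lemma on it without a reference is a real gap, and the paper deliberately avoids this by routing through Theorem~\ref{lemma-2}.

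For the algebraic fallback, the justification ``$v$ exists by the even codimension of the kernel of the skew operator whenever $\dim\mathfrak{m}\geq 2$'' is incorrect as stated: if $\dim\mathfrak{m}$ is odd, the kernel of a $g_u$-skew operator has odd dimension and could be exactly $\mathbb{R}u$, in which case no commuting $v$ is produced. Fortunately the gap closes automatically, because you invoke Lemma~\ref{lemma-9}: under that decomposition the nilradical of $\mathfrak{g}$ (hence the center $\mathfrak{z}(\mathfrak{g})$) lies in $\mathfrak{m}$, so in fact $X_\mathfrak{h}=0$, $u=X$ centralizes all of $\mathfrak{g}$, $[u,\mathfrak{m}]=0$, and any $v\in\mathfrak{m}\setminus\mathbb{R}u$ works since $\dim\mathfrak{m}=\dim M\geq 2$. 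You then apply Lemma~\ref{cor-1} directly. Once you notice that $X_\mathfrak{h}=0$, the entire skew-symmetry/even-codimension discussion becomes unnecessary, and the fallback argument is both correct and arguably cleaner and more self-contained than the paper's (it uses only Lemma~\ref{lemma-9} and Lemma~\ref{cor-1}, not the external Deng--Hou result). It does, however, only treat $Z(G)^0$; the torsion case you handle by maximality of $H$, but an infinite-order central element that does not lie in $Z(G)^0$ (which can exist since $G=I_0(M,F)$ need not be simply connected a priori) still requires the quotient or the axis argument; so the flag-curvature route alone does not finish the lemma.
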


\begin{proof} Assume conversely that there exists $\rho\in C(G)$ which is not the identity map.
Then $\rho$ generates an infinite discrete subgroup $\Gamma\subset C(G)$, which acts freely and isometrically on $(M,F)$.
Indeed, each $\rho^i$ is a Clifford-Wolf translation and each $\Gamma$-orbit is contained in a geodesic on $(M,F)$.
On the quotient manifold $\overline{M}=M/\Gamma$, $F$ induces a metric $\overline{F}$, such that the covering map $\pi:M\rightarrow \overline{M}$ is locally isometric everywhere. So $(\overline{M},\overline{F})$ is also negatively curved.
The $G$-action on $\overline{M}$ is transitive and isometric, so $(\overline{M},\overline{F})$
is a homogeneous Finsler manifold. Because $\overline{M}$ is not simply connected, we get a contradiction to
Theorem \ref{lemma-2}.
\end{proof}

Let $\mathfrak{g}=\mathfrak{s}+\mathfrak{r}$ be the Levi decomposition for $\mathfrak{g}=\mathrm{Lie}(G)$, where $\mathfrak{r}$ is the maximal solvable ideal and $\mathfrak{s}$ is a semi simple subalgebra of  $\mathfrak{g}$ respectively. Let $\mathfrak{k}$
be a maximally compactly imbedded subalgebra of $\mathfrak{s}$, i.e., $\mathfrak{k}$ generates a maximal
compact connected subgroup of $\mathrm{Int}(\mathfrak{s})$.

\begin{lemma}\label{lemma-7}
$\mathfrak{k}$ generates a compact connected subgroup $K$ in $G$.
\end{lemma}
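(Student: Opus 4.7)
The plan is to first show that $\mathfrak{k}$ is compactly imbedded in the whole Lie algebra $\mathfrak{g}$, not only in $\mathfrak{s}$, and then promote this Lie algebraic fact to compactness of $K$ by exploiting Lemma~\ref{lemma-6}, which forces $G$ to be isomorphic to a closed embedded Lie subgroup of $GL(\mathfrak{g})$.

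For the Lie algebraic step, the compact imbeddedness of $\mathfrak{k}$ in $\mathfrak{s}$ provides an $\mathrm{ad}_{\mathfrak{s}}(\mathfrak{k})$-invariant inner product on $\mathfrak{s}$. To extend it to $\mathfrak{g}=\mathfrak{s}\oplus\mathfrak{r}$ I would invoke the Cartan decomposition $\mathfrak{s}=\mathfrak{k}\oplus\mathfrak{p}$ and the associated compact real form $\mathfrak{u}=\mathfrak{k}\oplus i\mathfrak{p}$ of $\mathfrak{s}_{\mathbb{C}}$. Complexifying the $\mathrm{ad}$-representation of $\mathfrak{s}$ on $\mathfrak{r}$ and restricting to $\mathfrak{u}$ gives a finite-dimensional representation of a compact semisimple Lie algebra whose simply connected cover is compact; hence this representation admits a $\mathfrak{u}$-invariant Hermitian form on $\mathfrak{r}_{\mathbb{C}}$, whose real part restricts to an $\mathrm{ad}(\mathfrak{k})$-invariant real inner product on the real subspace $\mathfrak{r}$. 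Taking the orthogonal direct sum with the inner product on $\mathfrak{s}$ yields an $\mathrm{ad}_{\mathfrak{g}}(\mathfrak{k})$-invariant inner product on $\mathfrak{g}$.

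For the topological step, Lemma~\ref{lemma-6} gives $Z(G)=\{e\}$, so $\mathrm{Ad}\colon G\to GL(\mathfrak{g})$ is injective with image $\mathrm{Int}(\mathfrak{g})$. Since $\mathrm{Aut}(\mathfrak{g})$ is carved out of $GL(\mathfrak{g})$ by polynomial identities it is closed, and so is its identity component $\mathrm{Int}(\mathfrak{g})$; in particular $\mathrm{Ad}$ is a Lie group isomorphism of $G$ onto a closed embedded subgroup of $GL(\mathfrak{g})$. The connected Lie subgroup of $G$ generated by $\exp(\mathfrak{k})$ is then carried by $\mathrm{Ad}$ into the compact orthogonal group of the inner product constructed above, so its closure in $GL(\mathfrak{g})$ is compact; by closedness of $\mathrm{Int}(\mathfrak{g})$ this closure lies inside $\mathrm{Int}(\mathfrak{g})$, and pulling back along $\mathrm{Ad}^{-1}$ produces a compact connected subgroup $K\subset G$ containing $\exp(\mathfrak{k})$, as required.

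The main obstacle is the first step: upgrading a skew action of $\mathfrak{k}$ on $\mathfrak{s}$ to a skew action on all of $\mathfrak{g}$ genuinely requires $\mathfrak{k}$ to act with purely imaginary spectra on the radical $\mathfrak{r}$, which is not formal from compact imbeddedness in $\mathfrak{s}$ alone. Passing through the compact real form $\mathfrak{u}$ of $\mathfrak{s}_{\mathbb{C}}$ and appealing to the compactness of its simply connected group is the classical device that makes this work. Once that Lie algebraic input is in hand, the triviality of $Z(G)$ handles the rest, converting a compact subgroup of $GL(\mathfrak{g})$ back into a compact subgroup of $G$ without any covering ambiguity.
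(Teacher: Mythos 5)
The topological half of your argument is where it breaks. The claim that $\mathrm{Int}(\mathfrak{g})$ is the identity component of $\mathrm{Aut}(\mathfrak{g})$ and hence closed in $GL(\mathfrak{g})$ is false in general: the identity component of $\mathrm{Aut}(\mathfrak{g})$ has Lie algebra $\mathrm{Der}(\mathfrak{g})$, which may strictly contain $\mathrm{ad}(\mathfrak{g})$, and $\mathrm{Int}(\mathfrak{g})=\mathrm{Ad}(G)$ need not be closed even when $C(G)$ is trivial. Concretely, let $G=\mathbb{R}\ltimes\mathbb{C}^2$, where $t\in\mathbb{R}$ acts by $\mathrm{diag}(e^{\sqrt{-1}t},e^{\sqrt{-1}\alpha t})$ with $\alpha$ irrational. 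Then $C(G)=\{e\}$, but the closure in $GL(\mathfrak{g})$ of the one-parameter group $\mathrm{Ad}(\exp tX)$ ($X$ the generator of the $\mathbb{R}$-factor) is a two-dimensional torus which is not contained in $\mathrm{Ad}(G)$; so $\mathrm{Ad}(G)$ is not closed, $\mathrm{Ad}$ is only an injective immersion, and there is nothing to pull back along $\mathrm{Ad}^{-1}$. The same example refutes the implication your strategy rests on: $\mathbb{R}X$ is compactly imbedded in $\mathfrak{g}$ (its adjoint one-parameter group lies in a compact torus) and the center of $G$ is trivial, yet $\exp(\mathbb{R}X)\cong\mathbb{R}$ is not compact. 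Thus ``$\mathfrak{k}$ compactly imbedded in $\mathfrak{g}$ plus $C(G)=\{e\}$'' is genuinely weaker than the conclusion of the lemma, so your Step 1, even granted in full, cannot feed the Step 2 you propose.

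What is missing is an integrality (commensurability) statement for the eigenvalues of $\mathrm{ad}(u)$ when $u$ lies in the one-dimensional center $\mathfrak{c}(\mathfrak{k}_i)$ of a non-semisimple factor $\mathfrak{k}_i$ (the Hermitian symmetric case); this is precisely the danger spot, since the semisimple part of $\mathfrak{k}$ generates a compact subgroup for classical reasons. One needs not merely that $\mathrm{ad}(u)$ is skew with respect to some inner product, but that a suitable $u\in\mathfrak{c}(\mathfrak{k}_i)\backslash\{0\}$ has all eigenvalues of $\mathrm{ad}(u)$ in $2\pi\sqrt{-1}\,\mathbb{Z}$; then $\mathrm{Ad}(\exp u)=\mathrm{id}$, Lemma \ref{lemma-6} forces $\exp u=e$, and the one-parameter subgroup closes into a circle. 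The paper gets this from the fact that the weights of the adjoint action of the compact dual $\mathfrak{s}'_i$ on $\mathfrak{g}\otimes\mathbb{C}$ lie in a lattice, which your construction discards the moment you keep only the averaged inner product. Your own device does contain the seed of a correct proof: the representation of the compact real form $\mathfrak{u}$ on $\mathfrak{g}\otimes\mathbb{C}$ integrates to the compact simply connected group $U$, the connected subgroup of $U$ with Lie algebra $\mathfrak{k}$ is closed in $U$, hence the group generated by $e^{\mathrm{ad}(\mathfrak{k})}$, i.e.\ $\mathrm{Ad}(K)$, is itself compact (not just relatively compact); then $C(G)=\{e\}$ makes $\mathrm{Ad}|_K$ a bijective homomorphism with invertible differential onto this compact group, hence an isomorphism, and $K$ is compact. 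As written, however, the proposal proves only compact imbeddedness and then relies on the false closedness of $\mathrm{Int}(\mathfrak{g})$, so it does not establish the lemma.
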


Notice that a compactly imbedded subalgebra is compact, but generally speaking, it may not generate a compact subgroup. Here the negative curvature condition is crucial.

\begin{proof}We have a Lie algebra direct sum decomposition $\mathfrak{s}=\oplus_{i=1}^m\mathfrak{s}_i$, in which each $\mathfrak{s}_i$ is a simple ideal.
Correspondingly, $\mathfrak{k}=\oplus_{i=1}^m\mathfrak{k}_i$, where each $\mathfrak{k}_i$ is a maximally compact subalgebra of $\mathfrak{s}_i$. Denote by $K_i$ the connected Lie subgroup that $\mathfrak{k}_i$ generates. Then we have $K=K_1\cdots K_m$. To prove Lemma \ref{lemma-7}, we only need to verify that each $K_i$ is compact. There are two cases to consider.

{\bf Case 1}: $\mathfrak{k}_i$ is semi simple. In this case,  $\mathfrak{k}_i$ is compact and semi simple, so $K_i$ is compact.

{\bf Case 2}: $\mathfrak{k}_i$ is not semi-simple. In this case,
 $\mathfrak{k}_i=\mathfrak{k}'_i\oplus\mathfrak{c}(\mathfrak{k}_i)$, where
$\mathfrak{k}'_i=[\mathfrak{k}_i,\mathfrak{k}_i]$ is semi simple and $\dim_\mathbb{R}\mathfrak{c}(\mathfrak{k}_i)=1$.
Meanwhile, $\mathfrak{s}_i$ is a simple ideal of non-compact type, its compact dual $\mathfrak{s}'_i\subset\mathfrak{g}\otimes\mathbb{C}$ is compact simple and has the same
rank as $\mathfrak{k}_i$. Indeed, $(\mathfrak{s}'_i,\mathfrak{k}_i)$ is an irreducible Hermitian symmetric pair.
Let $\mathfrak{t}_i$ be a Cartan subalgebra of $\mathfrak{k}_i$, then it is also a Cartan subalgebra of $\mathfrak{s}'_i$, and it contains $\mathfrak{c}(\mathfrak{k}_i)$.
The $\mathrm{ad}_{\mathfrak{g}\otimes\mathbb{C}}(\mathfrak{s}'_i)$-action on $\mathfrak{g}\otimes\mathbb{C}$ only has purely imaginary weights in $\sqrt{-1}\mathfrak{t}_i^*$.
In particular, for any $u\in\mathfrak{c}(\mathfrak{k}_i)$, the semi simple complex linear endomorphism
$\mathrm{ad}_{\mathfrak{g}\otimes\mathbb{C}}(u)$ on $\mathfrak{g}\otimes\mathbb{C}$ only has purely imaginary eigenvalues.
Since the root system of $\mathfrak{k}_i$ is a subset in that
of $\mathfrak{s}'_i$, and $\mathfrak{c}(\mathfrak{g})$ is the common kernel for all roots of $\mathfrak{k}_i$, we can find a suitable $u\in\mathfrak{c}(\mathfrak{g})\backslash\{0\}$, such that
all eigenvalues of $\mathrm{ad}(u)$ are contained in $2\mathbb{Z}\pi\sqrt{-1}$.
Then
$\mathrm{Ad}(\exp u)=e^{\mathrm{ad}(u)}$ is the identity map on $\mathfrak{g}$.
Since $G$ is connected, $\exp u\in C(G)$, and it must be $e$ by Lemma \ref{lemma-6}.

To summarize, above argument indicates that
$\mathfrak{c}(\mathfrak{k}_i)$ generates a compact Lie subgroup $S^1$. Meanwhile,
the compact semi simple $\mathfrak{k}'_i=[\mathfrak{k}_i,\mathfrak{k}_i]$ generates a compact connected $K'_i$ in $G$. So $K_i=K'_i S^1$ is compact.  The proof of Lemma \ref{lemma-7} is finished.
\end{proof}

\begin{lemma}\label{lemma-8}
There exists a connected solvable subgroup $G'$ of $G$ acting transitively on $M$.
\end{lemma}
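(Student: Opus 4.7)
The plan is to take $\mathfrak{g}'=\mathfrak{a}+\mathfrak{n}+\mathfrak{r}$, where $\mathfrak{s}=\mathfrak{k}+\mathfrak{a}+\mathfrak{n}$ is an Iwasawa decomposition of the semisimple Levi factor that refines the maximally compactly imbedded subalgebra $\mathfrak{k}$ of Lemma \ref{lemma-7}, and then to let $G'$ be the connected Lie subgroup of $G$ generated by $\mathfrak{g}'$.

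First I would verify that $\mathfrak{g}'$ is a solvable Lie subalgebra of $\mathfrak{g}$. Since $\mathfrak{a}+\mathfrak{n}\subset\mathfrak{s}$ is a solvable subalgebra and $\mathfrak{r}$ is an ideal of $\mathfrak{g}$, the containment $[\mathfrak{a}+\mathfrak{n},\mathfrak{r}]\subset\mathfrak{r}\subset\mathfrak{g}'$ together with $[\mathfrak{a}+\mathfrak{n},\mathfrak{a}+\mathfrak{n}]\subset\mathfrak{a}+\mathfrak{n}$ shows $\mathfrak{g}'$ is closed under brackets. Solvability follows from the short exact sequence
$$0\longrightarrow\mathfrak{r}\longrightarrow\mathfrak{g}'\longrightarrow\mathfrak{a}+\mathfrak{n}\longrightarrow 0,$$
whose kernel and quotient are both solvable.

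Next I would verify that the resulting connected subgroup $G'$ acts transitively on $M=G/H$. By Lemma \ref{lemma-7} the subgroup $K$ generated by $\mathfrak{k}$ is compact; since $H$ is a maximal compact subgroup of the connected group $G$ and all such subgroups are $G$-conjugate, after replacing the Levi factor $\mathfrak{s}$ and its Iwasawa data by their $\mathrm{Ad}(g_0)$-conjugates for a suitable $g_0\in G$, I may assume $\mathfrak{k}\subset\mathfrak{h}$. This immediately yields
$$\mathfrak{h}+\mathfrak{g}'\supset\mathfrak{k}+\mathfrak{a}+\mathfrak{n}+\mathfrak{r}=\mathfrak{g},$$
so the $G'$-orbit of the origin $o$ is open in $M$.

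The main obstacle is promoting openness of this orbit to genuine transitivity, since neither the analytic subgroup $S$ of $G$ corresponding to $\mathfrak{s}$ nor the abstract subgroup $G'$ need be closed in $G$, so the orbit is only an immersed submanifold \emph{a priori}. I would handle this in the style of Wolf and Heintze by combining the global Iwasawa decomposition $S=KAN$ of the analytic subgroup with the global Levi--Mal'cev splitting $G=S\cdot R$ to obtain a set-theoretic factorization $G=K\cdot(ANR)\subset H\cdot G'$; the topological simple connectedness of $M$ (Theorem \ref{lemma-2}) together with Cartan--Hadamard rules out any monodromy obstruction. Once $G=H\cdot G'$ is established, $G'\cdot o=G\cdot o=M$, completing the proof.
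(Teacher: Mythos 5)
Your construction of $\mathfrak{g}'=\mathfrak{a}+\mathfrak{n}+\mathfrak{r}$ (Iwasawa data refining $\mathfrak{k}$, plus the radical) and the openness of the orbit $G'\cdot o$ via the conjugation achieving $\mathfrak{k}\subset\mathfrak{h}$ are exactly the paper's setup; where you diverge is the passage from an open orbit to transitivity. The paper's route is metric and short: $G'\cdot o$ with the restricted metric is itself a homogeneous Finsler manifold, hence complete, and a complete open submanifold of the connected manifold $M$ is also closed, so $G'\cdot o=M$. You instead propose the group-theoretic factorization $G=K\cdot(ANR)\subset H\cdot G'$, which can indeed be made rigorous: $G=S\cdot R$ holds set-theoretically for any connected $G$ (push the Levi--Mal'cev splitting of the universal cover down along the covering map), the global Iwasawa decomposition $S=KAN$ is valid for the analytic subgroup $S$ irrespective of whether it is closed in $G$, and since $H$ and $G'$ are subgroups, $G=HG'$ is equivalent to $G=G'H$ (take inverses), whence $M=G\cdot o=G'H\cdot o=G'\cdot o$. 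Two caveats: as written you pass from $G=H\cdot G'$ straight to $G'\cdot o=M$, which only gives $M=\bigcup_{h\in H}h\cdot(G'\cdot o)$ until you make the order reversal explicit; and the sentence invoking simple connectedness of $M$ and Cartan--Hadamard to rule out a ``monodromy obstruction'' does no work in the factorization argument and should be deleted---it is precisely the kind of vagueness the paper's completeness argument avoids. Note also that once $G=G'H$ is established your openness observation becomes redundant, whereas in the paper openness is the essential input, with completeness of the homogeneous open orbit doing the rest; either approach yields the lemma, the paper's being shorter and purely Finsler-geometric, yours requiring the standard global decomposition theorems for analytic subgroups.
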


\begin{proof}
By Lemma \ref{lemma-1}, $H$ is a maximal compact subgroup of $G$. Since any compact subgroup of $G$ is contained by a maximal one, and all maximal compact subgroups of $G$ are conjugate to each other (see Theorem 14.1.3 in \cite{HN2012}), we may assume without loss of generality that $H$ contains
the compact subgroup $K$ in Lemma \ref{lemma-7}, i.e., in the Lie algebraic level, $\mathfrak{k}\subset\mathfrak{h}$.

For each $\mathfrak{s}_i$, $1\leq i\leq m$, in the proof of Lemma \ref{lemma-7}, we have an Iwasawa decomposition
$\mathfrak{s}_i=\mathfrak{k}_i+\mathfrak{n}_i+\mathfrak{a}_i$ (when $\mathfrak{s}_i$ is compact, $\mathfrak{k}_i=\mathfrak{s}_i$ and $\mathfrak{n}_i=\mathfrak{a}_i=0$). Then $\mathfrak{g}'=\oplus_{i=1}^m(\mathfrak{n}_i+\mathfrak{a}_i)+\mathfrak{r}$ is a solvable subalgebra of $\mathfrak{g}$. Denote by $G'$ the solvable Lie subgroup $\mathfrak{g}'$ generates in $G$.
Since $\mathfrak{k}\subset\mathfrak{h}$, i.e., $\mathfrak{g}'+\mathfrak{h}=\mathfrak{g}$,
so $G'\cdot o$ is an open submanifold in $G/H$. Since both $(G/H,F)$ and $(G'\cdot o, F|_{G'\cdot o})$ are homogeneous, they are both complete. So we have $G'\cdot o=G/H=M$.
\end{proof}

\begin{lemma}\label{lemma-10}
There exists a connected solvable subgroup $G''$ of $G$, which acts freely and transitively on $M$.
\end{lemma}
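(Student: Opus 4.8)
The plan is to start from the transitive solvable subgroup $G'$ produced by Lemma \ref{lemma-8} and shrink its isotropy group to a point. Write $M = G'/H'$, where $H' = G' \cap H$ is the isotropy subgroup of the $G'$-action at $o$. Since $H'$ is a closed subgroup of the compact group $H$, it is itself compact. If $H'$ is trivial we are done, so assume $\dim H' \geq 1$ (or $H'$ is finite but nontrivial — this sub-case needs separate attention, see below). The key point, exactly as in Heintze's Riemannian argument, is that a nontrivial compact isotropy prevents $G'$ from being minimal, and one can pass to a proper subgroup that still acts transitively: concretely, I would look for a codimension-one (or at least proper) subalgebra $\mathfrak{g}'' \subsetneq \mathfrak{g}'$ with $\mathfrak{g}'' + \mathfrak{h}' = \mathfrak{g}'$, whence the connected subgroup $G''$ it generates still has open, hence (by completeness/homogeneity as in Lemma \ref{lemma-8}) all of $M$ as an orbit, but with strictly smaller isotropy. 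Iterating, the dimension of the isotropy drops to zero.

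To make the descent step work I would use the negative-curvature hypothesis through Lemma \ref{cor-1}. Suppose $\mathfrak{h}' \neq 0$ and pick $0 \neq x \in \mathfrak{h}'$. Because $\mathfrak{h}'$ is the Lie algebra of a compact group, $\mathrm{ad}(x)$ acting on $\mathfrak{g}'$ is semisimple with purely imaginary eigenvalues; in particular there is a real two-dimensional $\mathrm{ad}(x)$-invariant subspace on which $\mathrm{ad}(x)$ acts as a nonzero rotation, or $x$ itself lies in a nontrivial two-dimensional abelian subspace. I would then argue that the orbit geometry forces a commuting pair $u,v$ in $\mathfrak{g}$ with $g_u(u,[\mathfrak{g},u]) = 0$ unless the isotropy can be reduced — the mechanism being that a rotation in the isotropy algebra, combined with the reductive splitting $\mathfrak{g}' = \mathfrak{h}' + \mathfrak{m}'$ (Killing-orthogonal, via Lemma \ref{lemma-9}), produces a totally geodesic flat direction at $o$, contradicting Lemma \ref{cor-1}. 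More carefully, I would mimic Heintze: the nilradical $\mathfrak{n}'$ of $\mathfrak{g}'$ is contained in $\mathfrak{m}'$ by Lemma \ref{lemma-9}, and one shows $\mathfrak{h}'$ acts on $\mathfrak{g}'/\mathfrak{n}'$, and using the negative-curvature obstruction rules out any nonzero isotropy that is not "absorbed" by enlarging $\mathfrak{n}'$; equivalently, one exhibits an explicit proper transitive subalgebra by replacing $\mathfrak{h}'$-rotated generators with their "real parts."

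The main obstacle I anticipate is the last paragraph's descent step: showing that a nonzero compact isotropy $\mathfrak{h}'$ either directly contradicts Lemma \ref{cor-1} (by producing the degenerate commuting pair) or can be reduced. In the Riemannian case Heintze uses the curvature operator and the fact that the metric is positive definite to run a clean induction; here one only has the flag-curvature formula of Theorem \ref{cited-thm-3}, which requires the technical hypothesis $g_u(u,[u,\mathfrak{m}]_\mathfrak{m}) = 0$, so finding a flag pole $u$ satisfying it while also detecting the flat direction coming from the isotropy rotation is delicate. I would handle this by choosing $u$ in the $g_u$-orthogonal complement prescription (the horizontal-lifting condition of Lemma \ref{lemma-5} / Theorem \ref{cited-thm-3}) inside a cleverly chosen $\mathrm{ad}(x)$-invariant subspace, so that $[u,v]=0$ for $v$ the rotated partner and $U(u,v)$ either vanishes (giving $K \leq 0$ against negative curvature) or pinpoints how to shrink $\mathfrak{h}'$. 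Once the isotropy is trivial, $G'' \cong M$ as a manifold and acts freely transitively, and $M$ simply connected (Theorem \ref{lemma-2}) forces $G''$ simply connected; together with solvability of $\mathfrak{g}''\subset\mathfrak{g}'$ this completes the proof and feeds directly into Theorem \ref{main-thm-1}.
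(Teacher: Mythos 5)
Your plan hinges on a descent step that you yourself flag as unresolved, and that gap is genuine. You want to show that a nonzero compact isotropy algebra $\mathfrak{h}'$ either produces a commuting pair $u,v$ with $g_u(u,[\mathfrak{g},u])=0$ (so Lemma~\ref{cor-1} applies) or can be traded for a smaller isotropy, but you never actually exhibit either. A rotation $\mathrm{ad}(x)$, $x\in\mathfrak{h}'$, acts on $\mathfrak{m}'$; it gives you $x$ and a rotated partner, but nothing makes these commute, and the hypothesis $g_u(u,[u,\mathfrak{m}]_{\mathfrak{m}})=0$ of Theorem~\ref{cited-thm-3} is not arranged. Nor is the induction set up: you claim to find a proper transitive $G''\subsetneq G'$, but you do not show it has strictly smaller isotropy, so the iteration need not terminate. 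Also, your closing remark that ``$M$ simply connected forces $G''$ simply connected'' reverses the logic: what one needs is that the orbit map $G''\to M$, once it is a local diffeomorphism onto, is a covering, and then simple connectedness of $M$ forces it to be a diffeomorphism, which is what gives freeness.

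The paper's proof bypasses all of this and uses no curvature argument here. Apply Lemma~\ref{lemma-9} to the pair $(G',H')$: the Killing-orthogonal decomposition $\mathfrak{g}'=\mathfrak{h}'+\mathfrak{m}'$ is reductive and the maximal nilpotent ideal of $\mathfrak{g}'$ lies in $\mathfrak{m}'$. Since $\mathfrak{g}'$ is solvable, $[\mathfrak{g}',\mathfrak{g}']$ is a nilpotent ideal, hence lies in the maximal nilpotent ideal, hence in $\mathfrak{m}'$; therefore $\mathfrak{h}'\cap[\mathfrak{g}',\mathfrak{g}']=0$. Choose any linear complement $\mathfrak{g}''$ of $\mathfrak{h}'$ in $\mathfrak{g}'$ containing $[\mathfrak{g}',\mathfrak{g}']$. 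Because $\mathfrak{g}''\supset[\mathfrak{g}',\mathfrak{g}']$, it is automatically a solvable subalgebra, and $\mathfrak{g}''+\mathfrak{h}'=\mathfrak{g}'$ gives $G''\cdot o=M$ by the completeness argument of Lemma~\ref{lemma-8}. Since $\dim_\mathbb{R}G''=\dim_\mathbb{R}M$, the orbit map $\pi:G''\to M$ is a smooth covering; by Theorem~\ref{lemma-2} $M$ is simply connected, so $\pi$ is a diffeomorphism and the $G''$-action is free and transitive. There is no induction on $\dim\mathfrak{h}'$ and no invocation of Lemma~\ref{cor-1} at this stage; the negative curvature enters only through Lemma~\ref{lemma-8} (transitive solvable $G'$ exists) and Theorem~\ref{lemma-2} (simple connectedness).
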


\begin{proof}Let $G'$ be the connected solvable
Lie subgroup of $G$ given by Lemma \ref{lemma-8}.
Then $M$ can be presented as $M=G'/H'$. Let $\mathfrak{g}'=\mathrm{Lie}(G')$ and $\mathfrak{h}'=\mathrm{Lie}(H')$. Then
Lemma \ref{lemma-9} tells that
$\mathfrak{h}'\cap[\mathfrak{g}',\mathfrak{g}']=0$.
This observation enables us to find a linear complement $\mathfrak{g}''$ of $\mathfrak{h}'$ in $\mathfrak{g}'$ which contains $[\mathfrak{g}',\mathfrak{g}']$.
Indeed, $\mathfrak{g}''$ is a Lie subalgebra of $\mathfrak{g}'$, and it generates a connected solvable Lie subgroup $G''$ in $G'$.
By similar argument as in the proof of Lemma \ref{lemma-8}, we get $G''\cdot o=M$.
Because $\dim_\mathbb{R}G''=\dim_\mathbb{R}M$, the map $\pi:G''\rightarrow M$, $\pi(g)=g\cdot o$ is a smooth covering map. Since $M$ is connected and simply connected, this covering map $\pi$ is a diffeomorphism. So the $G''$-action on $(M,F)$ is free and transitive, which proves Lemma \ref{lemma-10}.
\end{proof}

Now we finish the proof of Theorem \ref{main-thm-1}.

\begin{proof}[Proof of Theorem \ref{main-thm-1}]
Lemma \ref{lemma-10} provides a connected solvable Lie subgroup $G''\subset I_0(M,F)$. The
map $\pi:G''\rightarrow M$, $\pi(g)=g\cdot o$, is a diffeomorphism. Further more, it is equivariant with respect to left translations on $G''$ and the natural action of $G''\subset I(M,F)$ on $M$. So $\pi^* F$ is a left
invariant metric on $G''$, i.e., $\pi$ is an isometry between $(M,F)$ and $(G'',\pi^*F)$.
The  first
statement in Theorem \ref{main-thm-1} is proved.

Suppose that the connected Lie group $G$ admits a negatively curved left invariant Finsler mtric. By Lemma \ref{lemma-2}, $G$ must be connected. We prove the solvability of $G$ by contradiction.
Assume $G$ is not solvable, then in the Levi decomposition $\mathfrak{g}=\mathfrak{s}+\mathfrak{r}$ for $\mathfrak{g}=\mathrm{Lie}(G)$, the semi simple
subalgebra $\mathfrak{s}$ has a nonzero maximal compactly imbedded subalgebra $\mathfrak{k}$.
By Lemma \ref{lemma-7}, $\mathfrak{k}$ generates a compact subgroup $K$ in $G$, with $\dim K>0$. The left translations of $K$ on $G$ is contained in a maximal compact subgroup of $I_0(G,F)$, so Lemma \ref{lemma-1} indicates that left translations of $K$ fixes some element of $G$. This is an impossible because $K\neq\{e\}$. The  second statement in Theorem \ref{main-thm-1} is proved.
\end{proof}

%

\section{Proof of Theorem \ref{main-thm-2}}

\subsection{Some notations and preparation lemmas}
Throughout this section, we apply the following notations and assumptions. Let
$G$ be a connected simply connected solvable Lie group, then $\mathfrak{g}=\mathrm{Lie}(G)$ is solvable.
We denote by
$$\mathfrak{l}^0=[\mathfrak{g},\mathfrak{g}], \ \mathfrak{l}^1=[\mathfrak{l}^0,\mathfrak{l}^0],\
\cdots,\ \mathfrak{l}^i=[\mathfrak{l}^0,\mathfrak{l}^{i-1}],\cdots$$
the descending sequence of $[\mathfrak{g},\mathfrak{g}]$.
Because $\mathfrak{g}$ is solvable, $\mathfrak{l}^0$ is nilpotent. We may assume it is $k$-step nilpotent, i.e., $\mathfrak{l}^k=0$ and $\mathfrak{c}(\mathfrak{l}^0)\supset\mathfrak{l}^{k-1}\neq0$.
For any $0\leq i<j$ and $y_0\in\mathfrak{g}$, $\mathrm{ad}(y_0)$ induces a real linear endomorphism $\mathrm{ad}_{\mathfrak{l}^i/\mathfrak{l}^j}(y_0)$ on $\mathfrak{l}^i/\mathfrak{l}^j$.
We denote by $\mathrm{pr}_j$ the linear projection from $\mathfrak{l}^0$ to $\mathfrak{l}^0/\mathfrak{l}^j$. Here are some obvious facts:
\begin{eqnarray}
\mathrm{ad}_{\mathfrak{l}^i/\mathfrak{l}^j}(y_0)(\mathrm{pr}_j(v))=
\mathrm{pr}_j([y_0,v]),\
e^{\mathrm{ad}_{\mathfrak{l}^i/\mathfrak{l}^j}(y_0)}(\mathrm{pr}_j(v))=
\mathrm{pr}_j(e^{\mathrm{ad}(y_0)}(v)),\ \forall v\in\mathfrak{l}^i.\label{011}
\end{eqnarray}
For simplicity, we use the same notations for real linear maps to denote their complexifications (i.e., their complex linear extension maps). For example, (\ref{011}) is still valid when we choose $v$ from $\mathfrak{l}^i\otimes\mathbb{C}$ and view the projection image $\mathrm{pr}_j(v)$
as a vector in $(\mathfrak{l}^i/\mathfrak{l}^j)\otimes\mathbb{C}=\mathfrak{l}^i\otimes\mathbb{C}
/\mathfrak{l}^j\otimes\mathbb{C}$.

\begin{lemma} \label{lemma-4}
Let $A$ be a real linear endomorphism on a finite dimensional real vector space $\mathbf{V}$.
Then the following statements are equivalent:
\begin{enumerate}
\item $A$ only has eigenvalues with positive real parts;
\item For each $v\in(\mathbf{V}\otimes\mathbb{C})\backslash\{0\}$, $\lim_{t\rightarrow+\infty}e^{tA}(v)=\infty$;
\item For each $v\in\mathbf{V}\backslash\{0\}$, there exits a sequence $t_n\in\mathbb{R}$ satisfying $\lim_{n\rightarrow\infty}t_n=+\infty$ and $\lim_{n\rightarrow\infty}e^{t_nA}(v)=\infty$.
\end{enumerate}
\end{lemma}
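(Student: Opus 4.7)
The plan is to prove the cycle $(1)\Rightarrow(2)\Rightarrow(3)\Rightarrow(1)$, since $(2)\Rightarrow(3)$ is immediate: any nonzero real vector is a nonzero complex vector, and one may simply take $t_n=n$.

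For $(1)\Rightarrow(2)$, I would pass to the complexification and use the generalized eigenspace decomposition. Write $\mathbf{V}\otimes\mathbb{C}=\bigoplus_\lambda V_\lambda$, the sum over the distinct eigenvalues $\lambda$ of $A$, where each $V_\lambda$ is $A$-invariant and $A|_{V_\lambda}=\lambda I+N_\lambda$ with $N_\lambda$ nilpotent. For any $v\in(\mathbf{V}\otimes\mathbb{C})\setminus\{0\}$ decompose $v=\sum v_\lambda$ with $v_\lambda\in V_\lambda$, at least one $v_\lambda$ being nonzero. On each piece,
\[
e^{tA}v_\lambda = e^{\lambda t}e^{tN_\lambda}v_\lambda = e^{\lambda t}\sum_{k=0}^{d-1}\tfrac{t^k}{k!}N_\lambda^k v_\lambda,
\]
where $d$ is the nilpotency index of $N_\lambda$ on $V_\lambda$. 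Letting $k_0$ be the largest integer with $N_\lambda^{k_0}v_\lambda\neq 0$, this gives $\|e^{tA}v_\lambda\|\sim c\, t^{k_0} e^{\mathrm{Re}(\lambda)t}$ as $t\to+\infty$, which tends to $\infty$ because $\mathrm{Re}(\lambda)>0$. Since the decomposition $\bigoplus V_\lambda$ is $A$-invariant and fixed once and for all, the norms on each summand contribute independently to $\|e^{tA}v\|$, so $\|e^{tA}v\|\to\infty$.

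For $(3)\Rightarrow(1)$ I argue by contrapositive: if $A$ has an eigenvalue $\lambda=a+bi$ with $a\leq 0$, I exhibit a real nonzero vector $v$ for which $\{e^{tA}v\mid t\geq 0\}$ is bounded, contradicting $(3)$ for any sequence $t_n\to+\infty$. If $\lambda$ is real, take a real eigenvector $v$; then $e^{tA}v=e^{\lambda t}v$ is bounded for $t\geq 0$. If $b\neq 0$, take a complex eigenvector $w=u+\sqrt{-1}\,v$ with $u,v\in\mathbf{V}$; comparing real and imaginary parts of $Aw=\lambda w$ shows that $W=\mathrm{span}^\mathbb{R}\{u,v\}$ is a two-dimensional real $A$-invariant subspace on which $A$ acts (in the basis $\{u,v\}$) as $\begin{pmatrix}a&b\\-b&a\end{pmatrix}$. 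Hence $e^{tA}|_W=e^{at}R(bt)$, with $R(bt)$ a rotation, so every nonzero vector in $W$ has bounded forward orbit. Either way, statement $(3)$ fails.

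No single step is a serious obstacle; the only mild care is in $(1)\Rightarrow(2)$, where one must make sure the growth estimate on each generalized eigenspace genuinely produces $\|e^{tA}v\|\to\infty$ rather than merely a non-vanishing lower bound, and that this survives in the direct sum. This is handled by isolating the dominant monomial $t^{k_0}e^{\lambda t}$ in each Jordan block and invoking $\mathrm{Re}(\lambda)>0$.
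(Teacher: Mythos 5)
Your proposal is correct and follows essentially the same route as the paper: generalized eigenspace/Jordan decomposition with the dominant term $t^{k_0}e^{\mathrm{Re}(\lambda)t}$ for (1)$\Rightarrow$(2), the trivial implication (2)$\Rightarrow$(3), and the contrapositive of (3)$\Rightarrow$(1) via a real eigenvector or a two-dimensional real invariant plane on which $e^{tA}$ acts as $e^{at}$ times a rotation. No substantive differences.
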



\begin{proof}First, we prove (1)$\Rightarrow$(2).
Assume that $A$ only has eigenvalues with positive real parts. Its complex linear extension on $\mathbf{V}\otimes\mathbb{C}$  shares the same eigenvalues as $A$, which only have positive real parts.
Notice that $\mathbf{V}\otimes\mathbb{C}$ is the linear direct sum of
$$(\mathbf{V}\otimes\mathbb{C})_\lambda=\{ v\in\mathbf{V}\otimes\mathbb{C}| \exists m>>0,\mbox{ s.t. } (\lambda I-A)^m(v)=0\}$$
for all eigenvalues $\lambda$ of $A$, and $e^{tA}$ preserves each $(\mathbf{V}\otimes\mathbb{C})_\lambda$. So
we only need to verify
$\lim_{t\rightarrow+\infty}e^{tA}(v)=\infty$ for each $v\in(\mathbf{V}\times\mathbb{C})\backslash\{0\}$. Using the Jordan form of $A$, we can find  $m\in\mathbb{N}\cup\{0\}$ and $v_0,v_1,\cdots,v_m\in(\mathbf{V}\otimes\mathbb{C})_\lambda$, satisfying $v_0=v$, $v_m\neq0$ and
\begin{eqnarray}\label{004}
e^{tA}(v)=e^{\lambda t}(v_0+tv_1+\cdots+t^m v_m),\quad\forall t\in\mathbb{R}.
\end{eqnarray}
By the assumptions $v_m\neq0$ and $\mathrm{Re}\lambda>0$,
we get $\lim_{t\rightarrow+\infty}e^{tA}(v)=\infty$ immediately. This ends the proof for (1)$\Rightarrow$(2).

Next, (2)$\Rightarrow$(3) is a trivial fact.

Finally, we prove (3)$\Rightarrow$(1).
Assume conversely that $A$ has an eigenvalue $\lambda$ with $\mathrm{Re}\lambda\leq0$.
If $\lambda\in\mathbb{R}$, $A$ has a nonzero eigenvector $v\in\mathbf{V}$ satisfying $A(v)=\lambda v$. Then we have $\lim_{t\rightarrow+\infty}e^{tA}(v)=\lim_{t\rightarrow+\infty}e^{\lambda t} v=0$ or $v$, which is a contradiction to (3). If $\lambda=a+b\sqrt{-1}$ with $a\leq0$ and $b\in\mathbb{R}\backslash\{0\}$, then we can find a linearly independent pair $u,v\in\mathbf{V}$,
such that $A(u)=au+ bv$ and $A(v)=-bu+av$. Then $e^{tA}(u)=e^{at}(\cos(b t)u+\sin(b t)v)$, which is periodic when $a=0$ and converges to $0$ when $a<0$. In each situation, we can get a contradiction to (3).
This ends the proof of Lemma \ref{lemma-4}.
\end{proof}

\begin{lemma}\label{lemma-13}
Let $\mathbf{V}$ be a finite dimensional real or complex vector space and $r$  a positive integer.
Suppose that we have $k_i\in\mathbb{N}\cup\{0\}$, $\xi_i=a_i+b_i\sqrt{-1}\in\mathbb{C}$ with $a_i>0$ and $b_i\in\mathbb{R}$, and $w_i\in\mathbf{V}\backslash\{0\}$, $\forall 1\leq i\leq r$.
Assume that the pairs in $\{(k_i,\xi_i),\forall 1\leq i\leq r\}$ are all distinct.
Then there exists a sequence $t_n\in\mathbb{R}$ satisfying $\lim_{n\rightarrow+\infty}t_n=+\infty$
and $\lim_{n\rightarrow+\infty}f(t_n)=\infty$, where $f(t)$ is the $\mathbf{V}$-valued function
$f(t)=\sum_{i=1}^s t^{k_i}e^{\xi_i t}w_i$.
\end{lemma}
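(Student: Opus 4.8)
The plan is to extract a single dominant term from $f(t)=\sum_{i=1}^r t^{k_i}e^{\xi_i t}w_i$ and show it cannot be cancelled along a suitable sequence $t_n\to+\infty$. First I would reduce to understanding the real parts: write $\xi_i=a_i+b_i\sqrt{-1}$ and let $a=\max_i a_i$. The terms with $a_i<a$ are negligible compared to $e^{at}$, so after dividing $f(t)$ by $e^{at}$ it suffices to analyze $g(t)=\sum_{i\in I} t^{k_i}e^{\sqrt{-1}b_i t}w_i$, where $I=\{i: a_i=a\}$, plus an error term that tends to $0$. Among the indices in $I$, let $k=\max_{i\in I}k_i$; dividing further by $t^k$, the dominant contribution is $h(t)=\sum_{i\in J}e^{\sqrt{-1}b_i t}w_i$ with $J=\{i\in I: k_i=k\}$, and the remaining terms in $I\setminus J$ contribute something that tends to $0$ (lower power of $t$). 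By the distinctness hypothesis on the pairs $(k_i,\xi_i)$, the frequencies $\{b_i : i\in J\}$ are pairwise distinct.

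The heart of the matter is then a classical fact about trigonometric polynomials: if $b_1,\dots,b_s$ are distinct real numbers and $w_1,\dots,w_s\in\mathbf{V}\setminus\{0\}$, then $\limsup_{t\to+\infty}\|\sum_{j=1}^s e^{\sqrt{-1}b_j t}w_j\|>0$. I would prove this by a Bohr/almost-periodicity style averaging argument: for a fixed index $j_0$, consider $\frac{1}{T}\int_0^T e^{-\sqrt{-1}b_{j_0}t}\,\sum_j e^{\sqrt{-1}b_j t}w_j\,dt$, which tends to $w_{j_0}\neq0$ as $T\to+\infty$ because the cross terms have nonzero frequency $b_j-b_{j_0}$ and average to zero. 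Hence the integrand cannot converge to $0$, so there is a sequence $t_n\to+\infty$ with $\|h(t_n)\|$ bounded below by a positive constant. (Alternatively one can project $\mathbf{V}$ onto a line along which $w_{j_0}$ is nonzero and invoke the scalar case, or use Weyl equidistribution of $(b_1t,\dots,b_st)$ on the closure of a subtorus; the averaging argument is the cleanest and avoids case analysis on rational dependence of the $b_j$.)

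Assembling the pieces: pick such a sequence $t_n\to+\infty$ with $\|h(t_n)\|\geq c>0$. Then
\[
\|f(t_n)\| = e^{at_n}t_n^{k}\,\bigl\| h(t_n) + o(1)\bigr\|,
\]
where the $o(1)$ collects the contributions of indices with $a_i<a$ (exponentially small) and indices in $I$ with $k_i<k$ (a factor $t_n^{-1}$ or smaller). For $n$ large this is at least $\tfrac{c}{2}e^{at_n}t_n^{k}\to+\infty$, giving $\lim_{n\to\infty}f(t_n)=\infty$ as required. (I note in passing the harmless typo in the statement: the sum should run to $r$, not $s$, consistently with the hypotheses; I would state and use it with upper limit $r$.)

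The main obstacle is the trigonometric-polynomial lower bound: one must be careful that in a vector space the different $w_j$ could conceivably conspire, and the cleanest safeguard is the averaging identity above, which isolates each $w_{j_0}$ cleanly regardless of whether the frequencies $b_j$ are rationally related. Everything else — the two successive reductions peeling off the maximal real part and then the maximal power of $t$ — is routine asymptotic bookkeeping once one keeps track of which terms are exponentially versus polynomially subdominant.
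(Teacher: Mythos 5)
Your proof is correct and follows essentially the same route as the paper: reduce to the indices realizing the maximal real part and then the maximal power of $t$, and then show the resulting trigonometric polynomial $\sum_{j}e^{\sqrt{-1}b_jt}w_j$ with distinct frequencies cannot tend to zero by an integration argument. The only difference is cosmetic: you extract the coefficient $w_{j_0}$ via the normalized Bohr average $\frac{1}{T}\int_0^T e^{-\sqrt{-1}b_{j_0}t}h(t)\,dt\to w_{j_0}$, while the paper divides by $t^{k_1}e^{\xi_1 t}$ (which already builds in the phase $e^{-\sqrt{-1}b_1t}$), assumes $f$ bounded for contradiction, and shows the unnormalized integral $\int_0^C\sum_{i\geq 2}e^{(b_i-b_1)t\sqrt{-1}}w_i\,dt$ would have to diverge yet is bounded by $\sum_i 2\pi\|w_i\|/|b_i-b_1|$ --- the same oscillation-cancellation mechanism, differently packaged.
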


\begin{proof}
Suppose that $\{1,\cdots,s\}$ is the set of all indices $i\in\{1,\cdots,r\}$ which satisfies $$a_i=\max\{a_j,\forall 1\leq j\leq r\}\quad and \quad k_i=\max\{k_j| a_j=\max\{a_k,\forall 1\leq k\leq r\}\}.$$ Because $\{(k_i,\xi_i),\forall 1\leq i\leq r\}$ are all distinct, $\{b_1,\cdots,b_s\}$ are all distinct. Direct calculation shows
\begin{eqnarray}\label{008}
(t^{k_1}e^{\xi_1 t})^{-1}f(t)=w_1+\sum_{i=2}^s
e^{(b_i-b_1)t\sqrt{-1}}w_i+o(1),
\end{eqnarray}
where $o(1)$ is with respect to $t\rightarrow+\infty$.

Now we prove Lemma \ref{lemma-13} by contradiction.
Assume conversely that it is not correct, then $f(t)$ is bounded for $t\in[0,+\infty)$, and the right side of (\ref{008}) converges to $0$ when $t$ goes to $+\infty$. Then we must have $s\geq 2$ and
$$\lim_{t\rightarrow+\infty}\sum_{i=2}^s e^{(b_i-b_1)t\sqrt{-1}}w_i=-w_1\neq0.$$ It implies
\begin{eqnarray}\label{009}
\lim_{C\rightarrow+\infty}\int_0^{C}\sum_{i=2}^s e^{(b_i-b_1)t\sqrt{-1}} w_i dt=\infty.
\end{eqnarray}
However, because each $e^{(b_i-b_1)t\sqrt{-1}}$ has zero integral in its periods, we have the estimate
\begin{eqnarray}\label{010}
||\int_0^C\sum_{i=2}^s e^{(b_i-b_1)t\sqrt{-1}}w_i dt||
\leq \sum_{i=2}^s \frac{2\pi||w_i||}{|b_i-b_1|}<+\infty,
\end{eqnarray}
in which $||\cdot||$ is any arbitrary norm on $\mathbf{V}$.
The contradiction between (\ref{009}) and (\ref{010}) ends the proof of Lemma \ref{lemma-13}.
\end{proof}

\subsection{Proof of (1)$\Rightarrow$(2) in Theorem \ref{main-thm-2}}

Assume that  there exists a negatively curved left invariant Finsler metric $F$ on $G$. The first statement in (2) of Theorem \ref{main-thm-2} has been proved in Proposition 4.1 in \cite{XD2017}. To be self contained, we recall its proof here. Since $\mathfrak{g}$ is solvable, $\dim_\mathbb{R}\mathfrak{l}^0<\dim_\mathbb{R}\mathfrak{g}$. So we can find $u\in\mathfrak{g}\backslash \mathfrak{l}^0 $ satisfying $g_u(u,\mathfrak{l}^0)=0$.
Obviously, we have $\dim_\mathbb{R}[u,\mathfrak{g}]\leq\dim_\mathbb{R}\mathfrak{l}^0
<\dim_\mathbb{R}\mathfrak{g}-1$. To prove the first statement in (2) of Theorem \ref{main-thm-2}, we only need to verify
$\dim_\mathbb{R}[u,\mathfrak{g}]=\dim_\mathbb{R}\mathfrak{g}-1$. Assume conversely it is not true, then
the kernel of $\mathrm{ad}(u):\mathfrak{g}\rightarrow[u,\mathfrak{g}]$ contains a vector $v\in\mathfrak{g}\backslash\mathbb{R}u$, i.e., $u$ and $v$ is a linear independent commuting pair.
By Lemma \ref{cor-1}, $F$ can not be negatively curved. This is a contradiction.

Next, we prove the second statement in (2) of Theorem \ref{main-thm-2}. If $\dim_\mathbb{R}\mathfrak{g}=2$, it can not be Abelian, otherwise it has constant zero curvature. Then we can find a basis $\{e_1,e_2\}$ for $\mathfrak{g}$, such that $[e_1,e_2]=e_2$. Choosing $y_0=e_1$, then the second statement in (2) is proved. In the discussion below, we assume $\dim_\mathbb{R}\mathfrak{g}\geq 3$.

By linear submersion, the projection map $\mathrm{pr}_1:\mathfrak{l}^0\rightarrow\mathfrak{l}^0/\mathfrak{l}^1$ and the Minkowski norm $F|_{\mathfrak{l}^0}$ on $\mathfrak{l}^0$ induces
a Minkowski norm $\overline{F}$ on $\mathfrak{l}^0/\mathfrak{l}^1$. We denote by $\overline{g}_\cdot(\cdot,\cdot)$ the fundamental tensor of $\overline{F}$. The fundamental tensor of $F|_{\mathfrak{l}^0}$ coincides with that of for $F$, i.e., $g_{\cdot}(\cdot,\cdot)$, except that all three inputs must be from $\mathfrak{l}^0$.

\begin{lemma}\label{lemma-11}
Choose any $y_0\in\mathfrak{g}\backslash\mathfrak{l}^0$, we have  $$\overline{g}_{\overline{u}}(\overline{u},\mathrm{ad}_{\mathfrak{l}^0/\mathfrak{l}^1}(y_0)
\overline{u})\neq0,\quad\forall\overline{u}\in(\mathfrak{l}^0/\mathfrak{l}^1)\backslash\{0\}.$$
\end{lemma}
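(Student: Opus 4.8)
The plan is to argue by contradiction and reduce to Lemma~\ref{cor-1}. Suppose some $\overline{u}\in(\mathfrak{l}^0/\mathfrak{l}^1)\backslash\{0\}$ satisfies $\overline{g}_{\overline{u}}(\overline{u},\mathrm{ad}_{\mathfrak{l}^0/\mathfrak{l}^1}(y_0)\overline{u})=0$. First I would take $u\in\mathfrak{l}^0$ to be the horizontal lifting of $\overline{u}$ for the linear submersion $\mathrm{pr}_1:(\mathfrak{l}^0,F|_{\mathfrak{l}^0})\rightarrow(\mathfrak{l}^0/\mathfrak{l}^1,\overline{F})$, so that $\mathrm{pr}_1(u)=\overline{u}$, $F(u)=\overline{F}(\overline{u})>0$ (hence $u\neq0$), and $g_u(u,\mathfrak{l}^1)=0$ because $\mathfrak{l}^1=\ker\mathrm{pr}_1$.

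The first real step is to show that the linear functional $x\mapsto\overline{g}_{\overline{u}}(\overline{u},\mathrm{pr}_1(x))$ on $\mathfrak{l}^0$ coincides with $x\mapsto g_u(u,x)$. Let $W$ be the $g_u$-orthogonal complement of $\mathfrak{l}^1$ in $\mathfrak{l}^0$; then $u\in W$, $\mathfrak{l}^0=W\oplus\mathfrak{l}^1$, and by Lemma~\ref{lemma-5} the map $\mathrm{pr}_1|_W$ is a linear isometry from $(W,g_u)$ onto $(\mathfrak{l}^0/\mathfrak{l}^1,\overline{g}_{\overline{u}})$. Writing $x=x_W+x_1$ with $x_W\in W$ and $x_1\in\mathfrak{l}^1$ for $x\in\mathfrak{l}^0$, one has $\mathrm{pr}_1(x)=\mathrm{pr}_1(x_W)$, so $\overline{g}_{\overline{u}}(\overline{u},\mathrm{pr}_1(x))=\overline{g}_{\overline{u}}(\mathrm{pr}_1(u),\mathrm{pr}_1(x_W))=g_u(u,x_W)=g_u(u,x)$, where the last equality uses $g_u(u,\mathfrak{l}^1)=0$ and the fact that the fundamental tensor of $F|_{\mathfrak{l}^0}$ agrees with $g_{\cdot}(\cdot,\cdot)$ on $\mathfrak{l}^0$. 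Applying this with $x=[y_0,u]\in[\mathfrak{g},\mathfrak{g}]=\mathfrak{l}^0$ and using $\mathrm{ad}_{\mathfrak{l}^0/\mathfrak{l}^1}(y_0)\overline{u}=\mathrm{pr}_1([y_0,u])$ from (\ref{011}), the hypothesis becomes $g_u(u,[y_0,u])=0$.

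Next I would promote this to $g_u(u,[\mathfrak{g},u])=0$: since $\dim_\mathbb{R}\mathfrak{g}=\dim_\mathbb{R}\mathfrak{l}^0+1$ we have $\mathfrak{g}=\mathbb{R}y_0+\mathfrak{l}^0$, and $u\in\mathfrak{l}^0$ gives $[\mathfrak{l}^0,u]\subseteq[\mathfrak{l}^0,\mathfrak{l}^0]=\mathfrak{l}^1$, so $[\mathfrak{g},u]\subseteq\mathbb{R}[y_0,u]+\mathfrak{l}^1$ and the two orthogonality relations combine to give $g_u(u,[\mathfrak{g},u])=0$. Finally I would choose $v\in\mathfrak{g}$ with $v\notin\mathbb{R}u$ and $[u,v]=0$, so that Lemma~\ref{cor-1} applied to the commuting pair $(u,v)$ contradicts the negative curvature of $(G,F)$. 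Such a $v$ exists because $\mathfrak{l}^0$ is nilpotent and $\dim_\mathbb{R}\mathfrak{l}^0\geq2$ (as $\dim_\mathbb{R}\mathfrak{g}\geq3$): the center $\mathfrak{c}(\mathfrak{l}^0)$ is nonzero, and if $u\notin\mathfrak{c}(\mathfrak{l}^0)$ one takes any $v\in\mathfrak{c}(\mathfrak{l}^0)\backslash\{0\}$ (automatically independent of $u$), while if $u\in\mathfrak{c}(\mathfrak{l}^0)$ one takes any $v\in\mathfrak{l}^0\backslash\mathbb{R}u$. The only slightly delicate point is the bookkeeping in the first step — ensuring that every vector used lies in $\mathfrak{l}^0$ so that Lemma~\ref{lemma-5} and the agreement of the two fundamental tensors legitimately apply; the remaining steps are short algebraic manipulations.
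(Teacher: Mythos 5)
Your argument follows the paper's proof step for step: pass to the horizontal lift $u$ of $\overline{u}$, use Lemma~\ref{lemma-5} to translate the hypothesis into $g_u(u,[y_0,u])=0$, combine this with $g_u(u,\mathfrak{l}^1)=0$ and $\mathfrak{g}=\mathbb{R}y_0+\mathfrak{l}^0$ to conclude $g_u(u,[\mathfrak{g},u])=0$, produce a commuting partner $v$ using nilpotency of $\mathfrak{l}^0$, and invoke Lemma~\ref{cor-1}. The only cosmetic differences are that you phrase the Lemma~\ref{lemma-5} step as an identity of linear functionals (the paper instead picks a representative $u'$ of the coset $[y_0,u]+\mathfrak{l}^1$) and you split cases on whether $u\in\mathfrak{c}(\mathfrak{l}^0)$ rather than on the nilpotency step $k$; both are correct and equivalent.
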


\begin{proof}
We prove Lemma \ref{lemma-11} by contradiction. Assume conversely that
\begin{eqnarray}\label{001}
\overline{g}_{\overline{u}}(\overline{u},\mathrm{ad}_{\mathfrak{l}^0/\mathfrak{l}^1}(y_0)
\overline{u})=0\mbox{ for some }\overline{u}\in(\mathfrak{l}^0/\mathfrak{l}^1)\backslash\{0\}.
\end{eqnarray}
Let $u$ be the horizonal lifting of $\overline{u}$, i.e.,
$u\in\mathfrak{l}^0\backslash\mathfrak{l}^1$ satisfies $\mathrm{pr}_1(u)=\overline{u}$ and $g_u(u,\mathfrak{l}^1)=0$. In $[y_0,u]+\mathfrak{l}^1$, there exists a unique $u'$ satisfying $g_u(u',\mathfrak{l}^1)=0$. So the assumption (\ref{001}) implies
\begin{eqnarray}\label{002}
g_u(u,[y_0,u])=g_u(u,[y_0,u]+\mathfrak{l}^1)=g_u(u,u')=
\overline{g}_{\overline{u}}
(\overline{u},\mathrm{ad}_{\mathfrak{l}^0/\mathfrak{l}^1}(y_0)\overline{u})
=0,
\end{eqnarray}
where we have applied Lemma \ref{lemma-5} to get the third equal.

The condition $g_u(u,\mathfrak{l}^1)=0$ implies
\begin{eqnarray}\label{007}
g_u(u,[\mathfrak{l}^0,u])=g_u(u,\mathfrak{l}^1)=0.
\end{eqnarray}
The first statement of (2), which has been proved, indicates that $\mathfrak{g}=\mathfrak{l}^0+\mathbb{R}y_0$, so (\ref{002}) and (\ref{007}) can be summarized as $g_u(u,[\mathfrak{g},u])=0$. To apply Lemma \ref{cor-1} to get the contradiction to negative curvature, we just need to find $v\in\mathfrak{g}\backslash\mathbb{R}u$ satisfying $[u,v]=0$. When $\mathfrak{l}^0$ is $k$-step nilpotent with $k>1$, we can choose $v$ from $\mathfrak{l}^{k-1}\backslash\{0\}\subset\mathfrak{l}^1$. When $\mathfrak{l}^0$ is $1$-step nilpotent, i.e., $\mathfrak{l}^0$ is Abelian, because $\dim_\mathbb{R}\mathfrak{l}^0=\dim_\mathbb{R}\mathfrak{g}-1\geq 2$,
we can choose $v$ from $\mathfrak{l}^0\backslash \mathbb{R}u$. This ends the proof of  Lemma \ref{lemma-11}.
\end{proof}

By Lemma \ref{lemma-11}, we can achieve $\overline{g}_{\overline{u}}(\overline{u},
\mathrm{ad}_{\mathfrak{l}^0/\mathfrak{l}^1}(y_0)\overline{u})>0$ for some
$\overline{u}\in(\mathfrak{l}^0/\mathfrak{l}^1)\backslash\{0\}$, by a possible replacement of $y_0$ with $-y_0$.   If $\dim_\mathbb{R}\mathfrak{l}^0/\mathfrak{l}^1=1$, $\mathrm{ad}_{\mathfrak{l}^0/\mathfrak{l}^1}(y_0)$ has only one eigenvalue, which is positive, so the second statement in (2) of Theorem \ref{main-thm-2} is proved in this case.

Now we consider the situation that $\dim_\mathbb{R}\mathfrak{l}^0/\mathfrak{l}^1>1$. By the connectedness, we have $$\overline{g}_{\overline{u}}(\overline{u},
\mathrm{ad}_{\mathfrak{l}^0/\mathfrak{l}^1}(y_0)\overline{u})>0,\quad\forall
\overline{u}\in(\mathfrak{l}^0/\mathfrak{l}^1)\backslash\{0\}.$$ Then by the positive 2-homogeneity and the continuity, there is a constant
$$c=\min_{\overline{u}\in(\mathfrak{l}^0/\mathfrak{l}^1)\backslash\{0\}}
\frac{\overline{g}_{\overline{u}}(\overline{u},
\mathrm{ad}_{\mathfrak{l}^0/\mathfrak{l}^1}(y_0)\overline{u})}{
\overline{g}_{\overline{u}}(\overline{u},\overline{u})}>0.$$
Fix any $\overline{u}\in(\mathfrak{l}^0/\mathfrak{l}^1)\backslash\{0\}$,
we set $\overline{u}(t)=e^{t\mathrm{ad}_{\mathfrak{l}^0/\mathfrak{l}^1}(y_0)}(\overline{u})$ and
$f(t)=\frac12\overline{F}(\overline{u}(t))^2$. Then for each $t\in\mathbb{R}$, $\overline{u}(t)\neq0$ and $f(t)$ depend on $t$ smoothly.
Calculation shows
$$\frac{d}{dt}f(t)=\overline{g}_{\overline{u}(t)}(\overline{u}(t),
\mathrm{ad}_{\mathfrak{l}^0/\mathfrak{l}^1}(y_0)(\overline{u}(t)))\geq
c\cdot\overline{g}_{\overline{u}(t)}(\overline{u}(t),\overline{u}(t))=cf(t),$$
$f(t)\geq e^{ct}f(0)>0$ when $t\geq0$. So we have $\lim_{t\rightarrow+\infty}e^{t\mathrm{ad}_{\mathfrak{l}^0
/\mathfrak{l}^1}(y_0)}(\overline{u})=\infty$
for any $\overline{u}\in(\mathfrak{l}^0/\mathfrak{l}^1)\backslash\{0\}$.
By (3)$\Rightarrow$(1) in Lemma \ref{lemma-4}, $\mathrm{ad}_{\mathfrak{l}^0/\mathfrak{l}^1}(y_0)$ only has eigenvalues with
positive real parts. This ends the proof of (1)$\Rightarrow$(2) in Theorem \ref{main-thm-2}.

\subsection{Proof of (2)$\Rightarrow$(3) in Theorem \ref{main-thm-2}}

Let $y_0\in\mathfrak{g}$ be the vector given in (2) of Theorem \ref{main-thm-2}. We will prove that it meets the requirement in the second statement in (3) of Theorem \ref{main-thm-2}.

\begin{lemma}\label{lemma-12}
For each $l\in\mathbb{N}$, $\mathrm{ad}_{\mathfrak{l}^{l-1}/\mathfrak{l}^{l}}(y_0)$ only has eigenvalues with positive real parts.
\end{lemma}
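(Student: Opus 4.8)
The plan is to prove Lemma~\ref{lemma-12} by induction on $l$, taking as the base case $l=1$, which is already established: we proved in the subsection ``Proof of (1)$\Rightarrow$(2)'' that $\mathrm{ad}_{\mathfrak{l}^0/\mathfrak{l}^1}(y_0)$ only has eigenvalues with positive real parts (here identifying $\mathfrak{l}^{-1}$ with $\mathfrak{l}^0$, i.e. reading the $l=1$ case as the statement about $\mathfrak{l}^0/\mathfrak{l}^1$). So assume $l\geq 2$ and that $\mathrm{ad}_{\mathfrak{l}^{i-1}/\mathfrak{l}^i}(y_0)$ has only eigenvalues with positive real parts for all $i<l$; we want the same for $\mathrm{ad}_{\mathfrak{l}^{l-1}/\mathfrak{l}^l}(y_0)$.

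The key algebraic input is that the quotient $\mathfrak{l}^{l-1}/\mathfrak{l}^l$ is a homomorphic image of a tensor-type construction built from $\mathfrak{l}^0/\mathfrak{l}^1$ and $\mathfrak{l}^{l-2}/\mathfrak{l}^{l-1}$. Concretely, since $\mathfrak{l}^{l-1}=[\mathfrak{l}^0,\mathfrak{l}^{l-2}]$, the bracket map descends to a surjective linear map
\begin{equation}
\mathfrak{l}^0/\mathfrak{l}^1 \otimes \mathfrak{l}^{l-2}/\mathfrak{l}^{l-1} \longrightarrow \mathfrak{l}^{l-1}/\mathfrak{l}^l, \qquad \overline{x}\otimes \overline{z} \longmapsto \overline{[x,z]},
\end{equation}
which is well-defined because $[\mathfrak{l}^1,\mathfrak{l}^{l-2}]\subset\mathfrak{l}^l$ (as $\mathfrak{l}^1\subset\mathfrak{l}^0$ forces $[\mathfrak{l}^1,\mathfrak{l}^{l-2}]\subset[\mathfrak{l}^0,\mathfrak{l}^{l-2}]=\mathfrak{l}^{l-1}$, and a short bracket-identity argument — or the Jacobi identity together with $\mathfrak{l}^1=[\mathfrak{l}^0,\mathfrak{l}^0]$ — gives the stronger containment in $\mathfrak{l}^l$) and similarly $[\mathfrak{l}^0,\mathfrak{l}^{l-1}]\subset\mathfrak{l}^l$. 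Moreover this map intertwines the $\mathrm{ad}(y_0)$-actions: using the Jacobi identity, $\mathrm{ad}(y_0)$ acts on the tensor product by the Leibniz rule $\mathrm{ad}_{\mathfrak{l}^0/\mathfrak{l}^1}(y_0)\otimes\mathrm{id}+\mathrm{id}\otimes\mathrm{ad}_{\mathfrak{l}^{l-2}/\mathfrak{l}^{l-1}}(y_0)$, and the bracket map carries this to $\mathrm{ad}_{\mathfrak{l}^{l-1}/\mathfrak{l}^l}(y_0)$.

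Granting this, the eigenvalue statement follows from linear algebra: the eigenvalues of $A\otimes I + I\otimes B$ are exactly the sums $\lambda+\mu$ over eigenvalues $\lambda$ of $A$ and $\mu$ of $B$. By the base case $A=\mathrm{ad}_{\mathfrak{l}^0/\mathfrak{l}^1}(y_0)$ has all eigenvalues with positive real part, and by the induction hypothesis (applied with $i=l-1$) $B=\mathrm{ad}_{\mathfrak{l}^{l-2}/\mathfrak{l}^{l-1}}(y_0)$ has all eigenvalues with positive real part, so every eigenvalue of $A\otimes I+I\otimes B$ has positive real part. Finally, the eigenvalues of a quotient (equivalently, a surjective-image) of an endomorphism form a subset of the eigenvalues of the original endomorphism — here one should pass to the surjection onto the image and note that $\mathrm{ad}_{\mathfrak{l}^{l-1}/\mathfrak{l}^l}(y_0)$ is the induced map on the image, whose eigenvalues lie among those of $A\otimes I+I\otimes B$; hence all of them have positive real part. (Over $\mathbb{C}$ this is cleanest, but by our convention all these maps are to be read as complexified, so this causes no difficulty.)

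The step I expect to need the most care is verifying the well-definedness and $\mathrm{ad}(y_0)$-equivariance of the bracket map on the quotients — precisely, checking the containment $[\mathfrak{l}^1,\mathfrak{l}^{l-2}]\subset\mathfrak{l}^l$ and $[\mathfrak{l}^0,\mathfrak{l}^{l-1}]\subset\mathfrak{l}^l$, since the descending series $\mathfrak{l}^i=[\mathfrak{l}^0,\mathfrak{l}^{i-1}]$ is not the usual lower central series and these inclusions require a small Jacobi-identity induction rather than being immediate. Everything after that — the tensor-product eigenvalue computation and the passage to a quotient — is routine linear algebra. An alternative that avoids tensor products entirely: argue directly that for $\overline{w}\in(\mathfrak{l}^{l-1}/\mathfrak{l}^l)\otimes\mathbb{C}$ nonzero, one can write $\overline{w}$ as a sum of images $\overline{[x_j,z_j]}$, track $e^{t\,\mathrm{ad}_{\mathfrak{l}^{l-1}/\mathfrak{l}^l}(y_0)}\overline{w}$ via the Leibniz-rule formula, and invoke Lemma~\ref{lemma-4} together with Lemma~\ref{lemma-13} to conclude $\lim_{t\to+\infty}e^{t\,\mathrm{ad}_{\mathfrak{l}^{l-1}/\mathfrak{l}^l}(y_0)}\overline{w}=\infty$; but the tensor-product packaging is cleaner and is the route I would write up.
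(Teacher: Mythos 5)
Your proof is correct, but it takes a genuinely different (and in this step, cleaner) route than the paper's. The paper argues at the level of the flow $e^{t\,\mathrm{ad}(y_0)}$: it uses the automorphism property $e^{t\,\mathrm{ad}(y_0)}[u,v]=[e^{t\,\mathrm{ad}(y_0)}u,\,e^{t\,\mathrm{ad}(y_0)}v]$ to expand $e^{t\,\mathrm{ad}(y_0)}(w)$ modulo $\mathfrak{l}^{l+1}$ as $\sum_i t^{k_i}e^{\xi_i t}w_i$ with $\mathrm{Re}\,\xi_i>0$ and distinct $(k_i,\xi_i)$, then needs Lemma~\ref{lemma-13} (the integral estimate) to rule out cancellation and force divergence, and finally converts back to eigenvalues via the implication (3)$\Rightarrow$(1) of Lemma~\ref{lemma-4}. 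You instead work infinitesimally: you package the bracket as an $\mathrm{ad}(y_0)$-equivariant surjection from $(\mathfrak{l}^0/\mathfrak{l}^1)\otimes(\mathfrak{l}^{l-2}/\mathfrak{l}^{l-1})$ onto $\mathfrak{l}^{l-1}/\mathfrak{l}^l$, invoke the standard fact that the spectrum of $A\otimes I+I\otimes B$ is the sumset of the spectra, and then that the spectrum of an invariant quotient is a subset of the spectrum of the total operator. This bypasses Lemma~\ref{lemma-13} and the (3)$\Rightarrow$(1) direction of Lemma~\ref{lemma-4} entirely, which is a real simplification. Two minor remarks: the containments you flag as needing care, $[\mathfrak{l}^1,\mathfrak{l}^{l-2}]\subset\mathfrak{l}^l$ and $[\mathfrak{l}^0,\mathfrak{l}^{l-1}]\subset\mathfrak{l}^l$, are in fact routine, since $\mathfrak{l}^i=[\mathfrak{l}^0,\mathfrak{l}^{i-1}]$ \emph{is} the lower central series of $\mathfrak{l}^0$ up to an index shift, so these follow from the standard inclusion $[\mathcal{C}^i,\mathcal{C}^j]\subset\mathcal{C}^{i+j}$; and the ``alternative'' you sketch at the end of your proposal, tracking $e^{t\,\mathrm{ad}(y_0)}$ via the Leibniz formula and invoking Lemmas~\ref{lemma-13} and~\ref{lemma-4}, is essentially what the paper actually does.
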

\begin{proof}
When $l=1$, Lemma \ref{lemma-12} is just
the second statement in (2) of Theorem \ref{main-thm-2}.
Now we further assume that for $l\in\mathbb{N}$,
$\mathrm{ad}_{\mathfrak{l}^{l-1}/\mathfrak{l}^{l}}(y_0)$ only has eigenvalues with positive real parts.

Using the Jordan forms of $\mathrm{ad}_{\mathfrak{l}^0/\mathfrak{l}^1}(y_0)$ and
$\mathrm{ad}_{\mathfrak{l}^{l-1}/\mathfrak{l}^l}(y_0)$, and similar argument as in the proof of Lemma \ref{lemma-4}, we can get:
\begin{enumerate}
\item For each $u\in\mathfrak{l}^0\otimes\mathbb{C}$, there exists $p\in\mathbb{N}\cup\{0\}$, $n_i\in\mathbb{N}\cup\{0\}$,
$\lambda_i\in\mathbb{C}$ with $\mathrm{Re}\lambda_i>0$,
$u_i\in\mathfrak{l}^0\otimes\mathbb{C}$, $\forall 1\leq i\leq p$, such that
$e^{t\mathrm{ad}_{\mathfrak{l}^0/\mathfrak{l}^1 }
(y_0)}(\mathrm{pr}{}_1(u))
=\sum_{i=1}^p t^{n_i}e^{\lambda_i t}\mathrm{pr{}_1}(u_i)$, i.e.,
\begin{eqnarray}\label{005}
e^{t\mathrm{ad}{}(y_0)}(u)
=\sum_{i=1}^p t^{n_i}e^{\lambda_i t} u_i\quad \mbox{(mod }\mathfrak{l}^1\otimes\mathbb{C}\mbox{)};
\end{eqnarray}
\item For each $v\in\mathfrak{l}^{l-1}\otimes\mathbb{C}$,
there exists $q\in\mathbb{N}\cup\{0\}$,
$m_j\in\mathbb{N}\cup\{0\}$,
$\mu_j\in\mathbb{C}$ with $\mathrm{Re}\mu_j>0$,
$v_j\in\mathfrak{l}^{l-1}\otimes\mathbb{C}$, $\forall 1\leq j\leq q$, such that
$e^{t\mathrm{ad}_{\mathfrak{l}^{l-1}{}/\mathfrak{l}^l{}}(y_0)}
(\mathrm{pr}{}_l(v))
=\sum_{j=1}^q t^{m_j}e^{\mu_j t}\mathrm{pr}{}_l(v_j)$, i.e.,
\begin{eqnarray}\label{006}
e^{t\mathrm{ad}{}(y_0)}(v)
=\sum_{j=1}^q t^{m_j}e^{\mu_j t} v_j\quad \mbox{(mod }\mathfrak{l}^l\otimes\mathbb{C}\mbox{)}.
\end{eqnarray}
\end{enumerate}

Any vector $w\in\mathfrak{l}^l\otimes\mathbb{C}$, mod $\mathfrak{l}^{l+1}\otimes\mathbb{C}$,
is a complex linear combination of vectors of the form $[u,v]$, with $u\in\mathfrak{l}^0\otimes\mathbb{C}$ and $v\in\mathfrak{l}^{l-1}\otimes\mathbb{C}$. Since $e^{t\mathrm{ad}{}(y_0)}$ is a complex Lie algebra  automorphism for each $t\in\mathbb{R}$,
for $u$ and $v$ in (\ref{005}) and (\ref{006}) respectively, we have
\begin{eqnarray*}
e^{t\mathrm{ad}{}(y_0)}([u,v])
&=&[e^{\mathrm{ad}{}(y_0)}(u),
e^{\mathrm{ad}{}(y_0)}(v)]\\
&=&[\sum_{i=1}^p t^{n_i}e^{\lambda_i t} u_i,
\sum_{j=1}^q t^{m_j}e^{\mu_j t} v_j]\quad (\mbox{mod } \mathfrak{l}^{l+1}\otimes\mathbb{C})\\
&=&\sum_{i=1}^p\sum_{j=1}^q t^{n_i+m_j}e^{(\lambda_i+\mu_j)t}[u_i,u_j]
\quad (\mbox{mod }\mathfrak{l}^{l+1}\otimes\mathbb{C}).
\end{eqnarray*}
Now we assume that $w$ is chosen from
$\mathfrak{l}^l\otimes\mathbb{C}\backslash\mathfrak{l}^{l+1}\otimes\mathbb{C}$, then for each $t\in\mathbb{R}$, $e^{t\mathrm{ad}(y_0)}(w)\in\mathfrak{l}^l\otimes\mathbb{C}
\backslash\mathfrak{l}^{l+1}\otimes\mathbb{C}$. Above calculations and observations provide
an integer $r>0$, $k_i\in\mathbb{N}\cup\{0\}$, $\xi_i\in\mathbb{C}$ with $\mathrm{Re}\xi_i>0$, $w_i\in\mathfrak{l}^l\otimes\mathbb{C}\backslash\mathfrak{l}^{l+1}\otimes\mathbb{C}$,
$\forall 1\leq i\leq r$, such that the pairs in
$\{(k_i,\xi_i),\forall 1\leq i\leq r\}$ are all distinct and
\begin{eqnarray}\label{012}
e^{t\mathrm{ad}{}
(y_0)}(w)=\sum_{i=1}^r t^{k_i}e^{\xi_i t}w_i,\quad \mbox{(mod }\mathfrak{l}^{l+1}\otimes\mathbb{C}\mbox{)}.
\end{eqnarray}
The equality (\ref{012}) can equivalently presented as
\begin{eqnarray*}
e^{t\mathrm{ad}_{\mathfrak{l}^l{}/\mathfrak{l}^{l+1}{}}(y_0)}
(\mathrm{pr}_{l+1}{}(w))=
\sum_{i=1}^r t^{k_i}e^{\xi_i t}\mathrm{pr}_{l+1}(w_i),
\end{eqnarray*}
where $\mathrm{pr}_{l+1}(w_i)$ is nonzero in 
$\mathfrak{l}^l\otimes\mathbb{C}/\mathfrak{l}^{l+1}\otimes\mathbb{C}$, $\forall 1\leq i\leq r$. 
Lemma \ref{lemma-13} provides a sequence $t_n\in\mathbb{R}$ satisfying $$\lim_{n\rightarrow+\infty}t_n=+\infty\quad \mbox{and}\quad \lim_{n\rightarrow+\infty}e^{t\mathrm{ad}_{\mathfrak{l}^l{}/\mathfrak{l}^{l+1}
{}}(y_0)}(\mathrm{pr}_{l+1}(w))=\lim_{n\rightarrow+\infty}\sum_{i=1}^r t_n^{k_i}e^{\xi_i t_n}\mathrm{pr}_{l+1}(w_i)=\infty.$$
By (3)$\Rightarrow$(1) in Lemma \ref{lemma-4},
$\mathrm{ad}_{\mathfrak{l}^l/\mathfrak{l}^{l+1}}(y_0)$ only has eigenvalues with positive real parts. This ends the proof of Lemma \ref{lemma-12} by induction.
\end{proof}

The spectrum (i.e., eigenvalue set, counting multiplicities) of $\mathrm{ad}_{\mathfrak{l}^0}(y_0):\mathfrak{l}^0\rightarrow\mathfrak{l}^0$ is the union of those of $\mathrm{ad}_{\mathfrak{l}^l/\mathfrak{l}^{l+1}}(y_0):\mathfrak{l}^l/\mathfrak{l}^{l+1}
\rightarrow\mathfrak{l}^l/\mathfrak{l}^{l+1}$, $\forall 0\leq l\leq k-1$. So the second statement in (3) of Theorem \ref{lemma-12} follows after Lemma \ref{lemma-12} immediately.

\noindent

Finally, we remark that (3)$\Rightarrow$(1) in Theorem \ref{main-thm-2} is an immediate corollary of Theorem \ref{cited-thm-1}. This ends the proof of Theorem \ref{main-thm-2}.

{\bf Acknowledgement}.
This paper is supported by Beijing Natural Science Foundation (No. 1222003), National Natural Science Foundation of China (No. 12131012, No. 11821101).
\bigskip

\noindent
{\bf Declarations}
\medskip

\noindent
{\bf Data Availability}\quad Not applicable.
\medskip

\noindent
{\bf Conflict of interest}\quad Not applicable.

\end{document}